\numberwithin{equation}{section}
\newtheorem{theorem}{Theorem}[section]
\newtheorem{proposition}[theorem]{Proposition}
\newtheorem{lemma}[theorem]{Lemma}
\newtheorem{Definition}[theorem]{Definition}
\newtheorem{Remark}[theorem]{Remark}
\newenvironment{remark}{\begin{Remark}\rm}{\end{Remark}}
\newtheorem{RHproblem}[theorem]{RH problem}
\newtheorem{Example}[theorem]{Example}
\newcommand{\C}{\mathbb{C}}
\newcommand{\D}{\mathbb D}
\newcommand{\R}{\mathbb{R}}
\newcommand{\T}{\mathbb{T}}
 \renewcommand{\AA}{\mathcal A} 
\newcommand{\CC}{\mathcal C}
\newcommand{\DD}{\mathcal D}
\newcommand{\EE}{\mathcal E}
\newcommand{\II}{\mathcal I}
\newcommand{\LL}{\mathcal L}
\newcommand{\OO}{\mathcal O}
\renewcommand{\Re}{{\rm Re} \,}
\renewcommand{\Im}{{\rm Im} \,}
\renewcommand{\bar}{\overline}
\renewcommand{\tilde}{\widetilde}
\renewcommand{\hat}{\widehat}
\begin{document}
\title{Zeros of Faber polynomials for Joukowski airfoils}
\author{N. Levenberg and F. Wielonsky}

\maketitle 

\begin{center} {\bf {In memory of J. Ullman}} \end{center}

\begin{abstract} {Let $K$ be the closure of a bounded region in the complex plane with simply connected complement whose boundary is a piecewise analytic curve with at least one outward cusp.} The asymptotics of zeros of Faber polynomials for $K$ are not understood in this general setting. Joukowski airfoils provide a particular class of such sets. We determine the (unique) weak-* limit of the full sequence of normalized counting measures of the Faber polynomials for Joukowski airfoils; {it is never equal to the potential-theoretic equilibrium measure of $K$. This implies that many of these airfoils admit an electrostatic skeleton} and also explains an interesting class of examples of Ullman \cite{U2} related to Chebyshev quadrature. 
\end{abstract}
%
%
\section{Faber polynomials of a compact set $K$}\label{faber}
Let $K\subset \C$ be a compact set consisting of more than one point such that the unbounded component $\Omega$ of $\bar\C\setminus K$ is simply connected. Let $\Phi$ be the (unique) conformal map from $\Omega$ to $\bar\C\setminus\bar\D$, 
$$\Phi:\Omega\to \bar\C\setminus\bar\D,$$
where $\D$ denotes the open unit disk, such that 
$$\Phi(\infty)=\infty,\qquad\Phi'(\infty)>0.$$ 
We denote by $\Psi$ the inverse map of $\Phi$. Then
$$\Phi(z)=\frac{z}{c_{K}}+a_{0}+\frac{a_{1}}{z}+\cdots,\quad
\Psi(z)=c_{K}z+b_{0}+\frac{b_{1}}{z}+\cdots,\qquad z\to\infty,$$
where $c_{K}$ denotes the logarithmic capacity of $K$. The {\it Faber polynomials $\{F_n\}$ for $K$} can be defined as follows:
$$F_n(z)=\Phi(z)^{n}+\OO(1/z),\qquad z\to\infty.$$
Equivalently,
\begin{equation}\label{altfaber} 
\frac{\Psi'(w)}{\Psi(w)-z}=\sum_{n=0}^{\infty}\frac{F_n(z)}{w^{n+1}}.
\end{equation}

To see a natural connection with potential theory, note that $P_{n}(z)=c_{K}^{n}F_n(z)$ are monic polynomials of degree $n$. By Cauchy's formula,
\begin{equation}\label{Pn-Cauchy}
P_{n}(z)=\frac{c_{K}^{n}}{2i\pi}\int_{\gamma_{\epsilon}}\frac{\Phi(t)^{n}}{t-z}dt,\qquad
z\in K,
\end{equation}
where $\gamma_{\epsilon}=\Psi(C_{1+\epsilon})$ and $C_{1+\epsilon}$ is the circle of radius $1+\epsilon$ centered at the origin. It follows from (\ref{Pn-Cauchy}) that
$$\limsup_{n\to \infty}\|P_{n}\|_{K}^{1/n}\leq(1+\epsilon)c_{K},$$
and letting $\epsilon$ go to $0$ we obtain 
$$\limsup_{n\to \infty}\|P_{n}\|_{K}^{1/n}\leq c_{K}.$$
Any monic polynomial $p$ of degree $n$ satisfies $||p||_K\geq  c_K^n$ so that, in fact, $$\lim_{n\to \infty}\|P_{n}\|_{K}^{1/n}=c_{K}.$$ Thus the $P_{n}$ are {\it asymptotically extremal} polynomials for $K$. Let 
$$\mu_n := \frac{1}{n}\sum_{j=1}^n \delta_{z_j^{(n)}}$$ 
where $z_1^{(n)},...,z_n^{(n)}$ are the zeros of $F_n$. We call $\mu_n$ the {\it normalized counting measure} of $F_n$. It follows that any weak-* subsequential limit $\mu$ of $\{\mu_n\}$ has a balayage to $\partial K$ which is the equilibrium measure $\mu_K$ of $K$ (cf., \cite[Theorem III.4.7]{ST}).

Ullman \cite{U1} proved a general result about limit points of zeros of the sequence of Faber polynomials $\{F_n\}_{n=1}^{\infty}$ for $K$. Building on Ullman's work, Kuijlaars and Saff \cite{KS} proved the following more refined result:

\begin{theorem}[{\cite[Theorem 1.5]{KS}}] If the interior $K^o$ of $K$ is empty, then 
$$\mu_n \to \mu_K, \quad\text{weak-*, as }n\to\infty.$$ 
If $K^o$ is connected and either 
\begin{enumerate}
\item $\partial K$ is not a piecewise analytic curve; or 
\item $\partial K$ is a piecewise analytic curve that has a singularity other than an outward cusp,
\end{enumerate}
then there is a subsequence of $\{\mu_n\}$ which converges in the weak-* topology to $\mu_K$.

\end{theorem}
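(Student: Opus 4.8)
The plan is to reduce the whole statement to a single question — whether the limiting measure places mass on the interior $K^o$ — and then to settle that question through the contour-integral representation of $F_n$ together with the analytic continuation of $\Psi$ across $\partial K$. First I would exploit the balayage fact already recorded. Because $\bar\C\setminus K$ is connected, the zeros of the asymptotically extremal polynomials $P_n=c_K^nF_n$ cluster on $K$, so every weak-* subsequential limit $\mu$ satisfies $\supp\mu\subset K$; write $\mu=\mu|_{K^o}+\mu|_{\partial K}$. By linearity of balayage and the stated property $\mathrm{bal}(\mu,\partial K)=\mu_K$, one has $\mathrm{bal}(\mu,\partial K)=\mu|_{\partial K}+\mathrm{bal}(\mu|_{K^o},\partial K)=\mu_K$, and since the swept measure $\mathrm{bal}(\mu|_{K^o},\partial K)$ lives on $\partial K$, disjoint from the open set $K^o$, it suffices to prove $\mu|_{K^o}=0$ to conclude $\mu=\mu_K$. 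This already disposes of the first assertion: if $K^o=\emptyset$ then $\mu|_{K^o}=0$ trivially, so \emph{every} subsequential limit equals $\mu_K$, and by sequential compactness of the probability measures on the compact set $K$ the full sequence converges, $\mu_n\to\mu_K$.

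For the interior I would start from the generating function \eqref{altfaber}, which upon multiplying by $w^n/(2\pi i)$ and integrating yields
$$F_n(z)=\frac{1}{2\pi i}\oint_{|w|=\rho}\frac{\Psi'(w)\,w^{n}}{\Psi(w)-z}\,dw,\qquad \rho>1.$$
Fix $z\in K^o$. Since $\partial K$ is piecewise analytic, $\Psi$ (hence $\Phi$) continues analytically across each open analytic arc into $K^o$, and by reflection $|\Phi|<1$ on the inner side of such an arc. Deforming the contour inward across $|w|=1$, detouring around the finitely many singular points, each branch of the continued equation $\Psi(w)=z$ with $|w|<1$ contributes a residue $w_j(z)^n$, giving
$$F_n(z)=\sum_{j} w_j(z)^n+\text{(remainder)},\qquad |w_j(z)|<1,$$
with the remainder subdominant wherever the continuation extends. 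Consequently $\tfrac1n\log|F_n(z)|\to\max_j\log|w_j(z)|<0$ off the \emph{equimodular locus} $E=\{z\in K^o:\text{the maximum of }|w_j(z)|\text{ is attained at least twice}\}$, and the zeros of a sum $\sum_j w_j(z)^n$ concentrate precisely where two dominant branches tie in modulus; thus $\mu|_{K^o}$ is carried by $E$.

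It then remains to show, in cases (1) and (2), that $E$ carries no limiting zeros along a suitable subsequence. The number and modulus-ranking of the branches $w_j$ is dictated by how $\Phi$ continues past the boundary singularities, and the plan is to argue that a non-cusp singularity never forces a second branch up to the maximal modulus on a set of positive capacity. At a corner or an inward cusp the local conformal exponent keeps the continued $\Phi$ organized around a unique dominant sheet near that point, and when $\partial K$ is merely piecewise analytic with a non-cusp singularity the continuation is obstructed so that no competing branch survives near the maximum; an \emph{outward} cusp is exactly the configuration in which a second branch is pushed up to modulus one along an interior arc and traps zeros there, which is what the Joukowski examples exhibit. The \emph{subsequence} enters because even a momentary tie in modulus produces zeros of $w_1(z)^n+w_2(z)^n$ only at the resonant values of $n\arg(w_1/w_2)$; passing to a subsequence of $n$ avoiding these phases removes any residual accumulation, forcing $\mu|_{K^o}=0$ and hence $\mu=\mu_K$ along that subsequence.

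The comparatively routine scaffolding is the balayage reduction of the first paragraph and the residue representation of the second. The hard part will be the global control of the analytic continuation of $\Phi$ throughout the simply connected $K^o$ past all boundary singularities at once, together with the sharp local analysis that decides, singularity by singularity, whether a competing branch can reach maximal modulus. Isolating outward cusps as \emph{precisely} the singularities admitting such a branch, and proving uniqueness of the dominant sheet off a polar set in every other case, is the crux of the argument.
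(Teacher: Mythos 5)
First, a point of reference: the paper contains no proof of this statement at all --- it is quoted verbatim from Kuijlaars and Saff \cite{KS}, so your attempt has to be measured against the argument in that reference. Your opening paragraph (the balayage reduction via \cite[Theorem III.4.7]{ST}, together with the fact that $F_n/\Phi^n\to 1$ locally uniformly in $\Omega$ so that no zero mass escapes $K$, which settles the empty-interior case with full convergence) is correct and standard. The rest of the plan has a structural gap: your central mechanism assumes exactly what hypotheses (1) and (2) deny. You write ``Since $\partial K$ is piecewise analytic, $\Psi$ (hence $\Phi$) continues analytically across each open analytic arc,'' but case (1) is precisely the hypothesis that $\partial K$ is \emph{not} piecewise analytic, so the contour deformation, the interior branches $w_j(z)$ of $\Psi(w)=z$, and the equimodular locus $E$ are never even defined there; and in case (2) the continuation fails exactly at the non-cusp singularity, the one point that matters. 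The Kuijlaars--Saff proof runs in the opposite direction: if no subsequence of $\{\mu_n\}$ converges to $\mu_K$, the zeros stay away from $\partial K$ along a subsequence, which makes $\frac1n\log|F_n|$ harmonic in a collar of $\partial K$ and lets its limit furnish a harmonic continuation of the Green's function $\log|\Phi|$, hence an analytic continuation of $\Psi$ past $\T$; univalence of $\Psi$ on $\{|w|>1\}$ then forces every singularity of the curve $\Psi(\T)$ to be a \emph{simple} zero of $\Psi'$ on $\T$, i.e., an outward cusp. Hypotheses (1)/(2) are incompatible with that conclusion, and the theorem follows by contraposition. In other words, the failure of analytic continuation is the \emph{engine} that drives zeros to the boundary; it is not a local obstruction to be classified singularity by singularity, which is the ``crux'' your proposal explicitly leaves open.

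The second gap is that your final subsequence mechanism is false. If two branches tie in modulus along an arc on which $\arg(w_1/w_2)$ is nonconstant, then the zeros of $w_1(z)^n+w_2(z)^n$ near that arc number roughly $n\,\Delta\arg(w_1/w_2)/(2\pi)$ for \emph{every} $n$; they cannot be removed by passing to a subsequence. The airfoil results of this very paper make the point: in Theorems 3.1, 3.2, 4.1 and 4.2, a positive fraction of the zeros (all of them, in Theorems 3.1 and 4.1) clusters on interior equimodular sets ($\mathcal A$, respectively $\LL_b^{+}$) along the \emph{full} sequence, and no subsequence of $\{\mu_n\}$ converges to $\mu_K$. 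This does not contradict the theorem, since airfoil boundaries have only outward cusps; but it shows that phase-avoidance along a subsequence cannot be a valid general mechanism, because nothing in that step of your argument uses the nature of the boundary singularity. In the Kuijlaars--Saff statement the subsequence has an entirely different origin: negating the continuation alternative only produces infinitely many $n$ for which zeros penetrate a given collar of $\partial K$, and it is along those $n$ that convergence to $\mu_K$ is extracted. So even granting your equimodular picture in case (2), the last step of the plan cannot be repaired, and in case (1) the plan never gets started.
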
 

Here by ``outward cusp'' at $z_0\in \partial K$ we mean the exterior angle at $z_0$ is $2\pi$.

Suppose that $K$ is the closure of a region bounded by a piecewise analytic curve
$L$ such that $\Psi$ 
has at least one singularity  on the unit circle $\T$. Mina-Diaz \cite{MD} studied behavior of the Faber polynomials when $L$ has no inner cusps (i.e., with exterior angle zero) but satisfying an extra condition when the singularities are only smooth corners (i.e., the exterior angle is $\pi$) and outer cusps. This extra condition is that the so-called Lehman expansion of $\Psi$ about at least one of the singularities contains logarithmic terms, see \cite[Assumption A.2]{MD} for details.
In particular, in his setting, there is always a subsequence of the normalized counting measures $\{\mu_n\}$ that converges in the weak-* topology to $\mu_K$. Indeed, the whole sequence $\{\mu_{n}\}$ converges to $\mu_{K}$ if $L$ is a Jordan curve. By different methods, this last assertion was also proven to be true if $L$ has an inner cusp, see \cite[Corollary 3.2]{SS}.

To the best of our knowledge, other than the $m-$cusped hypocycloid studied by He and Saff \cite{HS}, there are no known results on asymptotics of $\{\mu_n\}$ when the singularities of 
$\partial K$ are only outward cusps, none of which satisfies the extra condition in \cite{MD}. In this note, we analyze the very natural case of Joukowski airfoils (described in the next section) and we describe precisely the (unique) weak-* limit of the full sequence $\{\mu_n\}$ in the ``real'' setting (Section 3) and the ``complex'' setting (Section 4). In particular this limit measure is {\it never} equal to $\mu_K$ {and hence provides an electrostatic skeleton for $K$; see Remark \ref{ES}}. This also ``explains'' an interesting class of examples of Ullman \cite{U2} related to Chebyshev quadrature (Section 5).

\section{Joukowski and Faber: our set-up}

A natural way to construct regions bounded by a piecewise analytic curve with an outward cusp is to take a classical {\it Joukowski airfoil}. Mathematically, $\Psi: \{z:|z|>1\}\to \C \setminus K$ is the composition $\Psi = J\circ T$ where 
$$J(\zeta) = {1\over 2} \left(\zeta+\frac1\zeta\right)$$ is the Joukowski map and $\zeta=T(z)=az+b$ with $a,b\in \C$ chosen so that $-1$ lies in the interior of $K$ and $1$ lies on $\partial K$, and here we have an outward cusp (notice that $1$ and $-1$ are the points where the derivative of $\zeta+\zeta^{-1}$ vanishes). Thus $\Psi$ is a particular kind of rational exterior mapping function as studied by Liesen \cite{L} (who utilizes (\ref{altfaber})). Indeed, 
$$\Psi(z)= \frac{a^2z^2+2abz+b^2+1}{2az+2b}.$$
In this case, it is clear that the expansion of $\Psi$ near the singularity $1$ does not contain any logarithmic terms.
It will often be more convenient to write
\begin{equation}\label{def-zeta}\zeta=T(z)=az+b=Re^{i\theta}(z-1)+1.\end{equation}
Here $R>1$ and $\theta\in (-\pi/2,\pi/2)$ must be chosen so that the circle 
$$\{\zeta = T(e^{it}): 0\leq t \leq 2\pi\}$$
surrounds the point $-1$. Note that $T(1)=1$ so that $\Psi(1) =1$ and we do, indeed, have an outward cusp at $z=1$. It follows that $R\cos \theta >1$; i.e., $\Re (Re^{i\theta})>1$. Our Joukowski airfoil $K$ is symmetric with respect to the real axis if and only if $\theta =0$ (of course $R>1$); we will call this the {\it real case}. The relation between $a,b, R, \theta$ is 
\begin{equation}\label{def-ab} a=Re^{i\theta}, \quad b= 1-Re^{i\theta} \quad \hbox{where} \ \Re b <0. \end{equation}
The real case corresponds to $b<0$.

Returning to \cite{L}, Liesen defines ``shifted'' Faber polynomials $\hat F_n$ which, in our setting, are simply related to $F_n$ by an additive constant:
\begin{equation}\label{link-shift} \hat F_n(z):=F_n(z) +(-b/a)^n. \end{equation}
In his equation (19) he gives an explicit formula for $ \hat F_n$. We modify his notation slightly to write
\begin{equation}\label{exp-shift} 
\hat F_n(z)=2a^{-n}V(z)^{n/2}T_n\left(\frac{W(z)}{V(z)^{1/2}}\right) \end{equation}
where
\begin{equation}\label{VW} V(z)=b^2+1-2bz, \quad W(z)=z-b \end{equation} 
-- thus our $W$ is $a^2$ times that of Liesen while our $V$ is $a$ times his -- and $T_n$ is the classical Chebyshev polynomial of the first kind:
$$T_n(z)=\frac{1}{2}\left([z+\sqrt{z^2-1}]^n+ [z-\sqrt{z^2-1}]^n\right).$$
Since $T_n$ is even if $n$ is even and odd if $n$ is odd, (\ref{exp-shift}) is independent of the choice of the square root for $V(z)^{1/2}$. We adopt the convention that
\begin{equation}\label{choice-root} V(1)^{1/2} = 1-b. \end{equation}
Even more explicitly, this gives
\begin{equation}\label{use?} F_n(z)=\left({1\over a}\right)^n\left[ \left(z+(-b) + \sqrt {z^2-1}\right)^n  +\left(z+(-b) - \sqrt {z^2-1}\right)^n  -(-b)^n\right].\end{equation}
We study the asymptotics of $z_1^{(n)},...,z_n^{(n)}$, the zeros of $F_n$, and the corresponding normalized counting measures 
$$\mu_n := \frac{1}{n}\sum_{j=1}^n \delta_{z_j^{(n)}}.$$
 
For future use, we define
\begin{equation} \label{def-U} U(z):= \frac{W(z)}{V(z)^{1/2}}= \frac{z-b}{\sqrt{b^2+1-2bz}} \end{equation}
and 
\begin{equation} \label{def-c}
c:=\frac{1}{2}\left(b+\frac1b\right)
\end{equation}
so that $V(c)=0$. Note that $U$ is defined and holomorphic in the complex plane outside of a branch cut joining $c$ to infinity. From (\ref{exp-shift}), the zeros of the shifted Faber polynomial $\hat F_n$ other than $c$ must occur at points $z\in \C$ such that $U(z)\in [-1,1]$. Let 
\begin{equation}\label{arcA}
\mathcal A:=\left\{z\in \C: U(z)= \frac{z-b}{\sqrt{b^2+1-2bz}}\in [-1,1]\right\}.
\end{equation}

\begin{lemma}\label{arc} Depending on $b$, the set $\mathcal A$ in (\ref{arcA}) is a simple arc joining $1$ and $-1$ or the union of $[-1,1]$ and a circle. It contains the point $b$. The point $c$ in (\ref{def-c}) does not belong to $\mathcal A$ unless $b=-1$. We have $U'(1/b)=0$. Finally, $1/b\in \mathcal A$ if and only if $b\in (-\infty, -1]$. 
\end{lemma}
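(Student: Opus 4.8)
The plan is to eliminate the branch ambiguity of $U$ at the outset by passing to its square, the genuinely single-valued rational function
$$ g(z):=U(z)^2=\frac{W(z)^2}{V(z)}=\frac{(z-b)^2}{b^2+1-2bz}, $$
which is a degree-two map of $\hat{\C}$. Because a complex number lies in $[0,1]$ exactly when one (hence either) of its square roots lies in $[-1,1]$, we have $U(z)\in[-1,1]\iff g(z)\in[0,1]$, so that
$$ \mathcal A=g^{-1}([0,1]) $$
independently of the square-root convention. The pointwise assertions are then short computations. Differentiating gives $U'(z)=(1-bz)/V(z)^{3/2}$, so $U'(1/b)=0$ and $1/b$ is the unique critical point of $U$ in its domain; equivalently $g'(z)=2(z-b)(1-bz)/V(z)^2$ has its only finite critical points at $b$ and $1/b$. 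Since $U(b)=0$ we get $b\in\mathcal A$. Since $V(c)=0$ while $c\neq b$ whenever $b\neq -1$, the map $g$ has a pole at $c$, so $c\notin\mathcal A$; and when $b=-1$ the zero and pole of $g$ cancel, $g(z)=(z+1)/2$, $\mathcal A=[-1,1]$, and indeed $c=-1\in\mathcal A$.

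For the final assertion I would compute the two critical values of $g$, namely $g(b)=0$ and $g(1/b)=1-b^{-2}=U(1/b)^2$, and ask when $g(1/b)\in[0,1]$. This forces $b^{-2}$, hence $b^2$, to be real with $b^2\geq 1$; as $\Re b<0$ excludes purely imaginary $b$, this occurs precisely for real $b\leq -1$. By the equivalence above, $1/b\in\mathcal A$ if and only if $b\in(-\infty,-1]$.

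It remains to describe the topology, which I would read off from the way the degree-two map $g$ covers the segment $[0,1]$. The endpoint $0$ is the critical value attained only at $b$, where $g$ has a double zero, so the cover is ramified there; the endpoint $1$ satisfies $g(z)=1\iff z^2=1$, giving two simple, unramified preimages $\{1,-1\}$. Over the open interval $g$ is a two-sheeted cover, and the whole picture is controlled by whether the remaining critical value $g(1/b)$ lies in the interior $(0,1)$, which by the previous paragraph happens exactly when $b<-1$. If $b\notin(-\infty,-1]$ there is no interior critical value; the two sheets over $(0,1)$ are joined only by the ramification at $b$, so the preimage is a single arc that runs from $z=1$ down to $z=b$ (as the value decreases to $0$) and back up to $z=-1$, i.e.\ a simple arc joining $1$ and $-1$ through $b$. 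If instead $b\leq -1$, one checks directly that $[-1,1]\subseteq\mathcal A$ --- for real $x\in[-1,1]$ one has $V(x)>0$ and $g(x)\leq 1\iff x^2\leq 1$ --- and on this segment $g$ folds at the interior critical point $1/b$, covering $[g(1/b),1]$ twice; the complementary values $[0,g(1/b)]$ are covered by a second loop through $1/b$. To see that this loop is an actual circle I would eliminate the parameter from the (conjugate-pair) solutions of $g(z)=s^2$, obtaining the equation $|z-c|=(b^2-1)/(2|b|)$ of a circle centered at $c$ with $b,1/b$ as the endpoints of a diameter.

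The step I expect to be the main obstacle is precisely this global bookkeeping: upgrading the local two-to-one behaviour of $g$ at the endpoints $0,1$ and at the interior branch point $1/b$ into a rigorous statement that $g^{-1}([0,1])$ is connected with exactly the claimed topology --- a single simple arc in the first case, with no stray components --- and, in the second case, confirming through the explicit elimination that the extra component is a genuine round circle rather than merely a topological loop. The remaining computations are routine.
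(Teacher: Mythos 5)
Your proposal is correct, and its computational core coincides with the paper's: both proofs kill the branch ambiguity by passing to $g=U^2=W^2/V$, identify $\mathcal A$ as $g^{-1}([0,1])$, and settle the pointwise claims by direct evaluation ($g(b)=0$; $g$ has a pole at $c$ once $c\neq b$, i.e.\ once $b\neq-1$; $U'(1/b)=0$; and $g(1/b)=1-1/b^2$, which lies in $[0,1]$ exactly when $b\in(-\infty,-1]$). Where you genuinely diverge is in how the global shape of $\mathcal A$ is established. The paper solves $g(z)=\rho$ explicitly, obtaining the parameterization $z=b(1-\rho)\pm\sqrt{\rho(1-b^2+b^2\rho)}$, $0\leq\rho\leq1$, and then simply asserts that the arc/circle description ``follows''; you instead use that $g$ is a degree-two rational map branched exactly over $0$ and over $g(1/b)$, so the lemma's dichotomy $b\in(-\infty,-1]$ versus $b\notin(-\infty,-1]$ is precisely whether the second branch value enters $[0,1]$. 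In the first case $g^{-1}\bigl((0,1]\bigr)$ is an unramified double cover of a simply connected set, hence two disjoint arcs whose closures meet only at $b$, giving a simple arc from $1$ through $b$ to $-1$; in the second case you get $[-1,1]$ (checked directly) plus a loop, and your proposed elimination does go through in one line: for the conjugate-pair roots, $|z-c|^2=\bigl(b(1-\rho)-c\bigr)^2+\rho(b^2-1-b^2\rho)\equiv\bigl((b-1/b)/2\bigr)^2$, so the loop is the circle centered at $c$ with $b$ and $1/b$ diametrically opposite --- the paper's $\tilde{\mathcal C}_b$ of (\ref{ctilde}). The two routes are ultimately the same computation (your conjugate-pair solutions are the paper's parameterization), but your covering-space bookkeeping buys rigor exactly where the paper is terse: degree counting rules out stray components, and the ramification picture explains structurally why the topology changes at $b=-1$; the paper's explicit formula buys brevity and immediately yields the parameterized curve used later in the case-by-case discussion.
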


\begin{proof} We have $U(z):= {W(z)}/{V(z)^{1/2}}\in [-1,1]$ if and only if 
$$U^2(z)= \frac{W^2(z)}{V(z)}=\frac{(z-b)^2}{b^2+1-2bz} =\rho \in [0,1].$$
This gives a parameterization of the set $\mathcal A$: 
$$z=b(1-\rho)\pm \sqrt {\rho(1-b^2+b^2\rho)}, \quad 0\leq \rho \leq 1,$$
from which follows the assertions in the lemma. In particular, $z=b$ for $\rho =0$ and $z=\pm 1$ for $\rho = 1$. A direct calculation shows $U'(1/b)=0$. Using the parameterization, $1/b\in \mathcal A$ occurs if and only if $\rho = 1-1/b^2\in [0,1]$ so that $b\in (-\infty, -1]$. \end{proof} 

Qualitatively we have three cases to consider/describe:
\begin{enumerate}
\item {\sl Case $b\not\in (-\infty, -1]$}: One checks that $\mathcal A$ is a simple arc. In the special case $b\in (-1,0)$ we have 
$\mathcal A=[-1,1]$. Since $c\not\in \mathcal A$, a branch cut for $U$ can be taken to avoid $\mathcal A$. Moving along $\mathcal A$ from $-1$ to $1$, $U$ increases with $U(-1)=-1$; $U(b)=0$; and $U(1)=1$ (recall (\ref{choice-root})). In other words, giving $\mathcal A$ the positive orientation from $-1$ to $1$, $U:\mathcal A \to [-1,1]$ is a one-to-one, onto, increasing map.   
\item {\sl Case $b \in (-\infty, -1]$}: Define the circle
\begin{equation}\label{ctilde} 
\tilde {\mathcal C}_b:=\{z\in \C: |z-c|=c-b\}. \end{equation}
Note that $b,1/b$ are the points of intersection of $\tilde {\mathcal C}_b$ with the real axis; $\rho =0$ corresponds to $b$ while $\rho = 1-1/b^2$ corresponds to $1/b$. In this case $\mathcal A =[-1,1]\cup \tilde {\mathcal C}_b$ and $1/b\in [-1,1]\cap \tilde {\mathcal C}_b$; moreover the point $c$ lies inside $\tilde {\mathcal C}_b$ and hence any branch cut for $U$ intersects $\tilde {\mathcal C}_b$. For simplicity we take $(-\infty,c)$ as the branch cut. Since $U(b)=0$, $z\to U(z)$ is continuous as $z$ crosses $(-\infty,c)$. Note in this case $U(-1)=1$. Now as $z$ moves to the right along $[-1,1]$ starting at $-1$, $U(z)$ decreases from $U(-1)=1$ to $U(1/b)= -\sqrt{b^2-1}/b>0$. This is the minimum value $U$ attains on $[-1,1]$. Continuing, $U$ increases on $[1/b,1]$ as we move to the right from $1/b$ to $1$ where $U(1)=1$.  In particular, 
$U:[1/b,1] \to [-\sqrt{b^2-1}/b,1]$ is a one-to-one, onto, increasing map. One checks that for $z$ on the circle $\tilde {\mathcal C}_b$, the values of $U(z)$ vary continuously between $U(1/b)= -\sqrt{b^2-1}/b$ and $U(b)=0$. 
\item {\sl Case $b =-1$}: In this special case of the previous one,  $\mathcal A =[-1,1]$ as $c=b=-1=\tilde {\mathcal C}_b$. Here  $U(-1)=0$ and $U(z)$ takes values from $0=U(-1)$ to $1=U(1)$ as $z$ moves from $-1$ to $1$ along $\mathcal A =[-1,1]$. 

\end{enumerate}
The behavior of $U(z)$ on $[-1,1]$ when $b$ is real (and negative) is depicted in Figure~\ref{foncU}.
\begin{figure}[!tb]
\centering
\includegraphics[width=5cm]{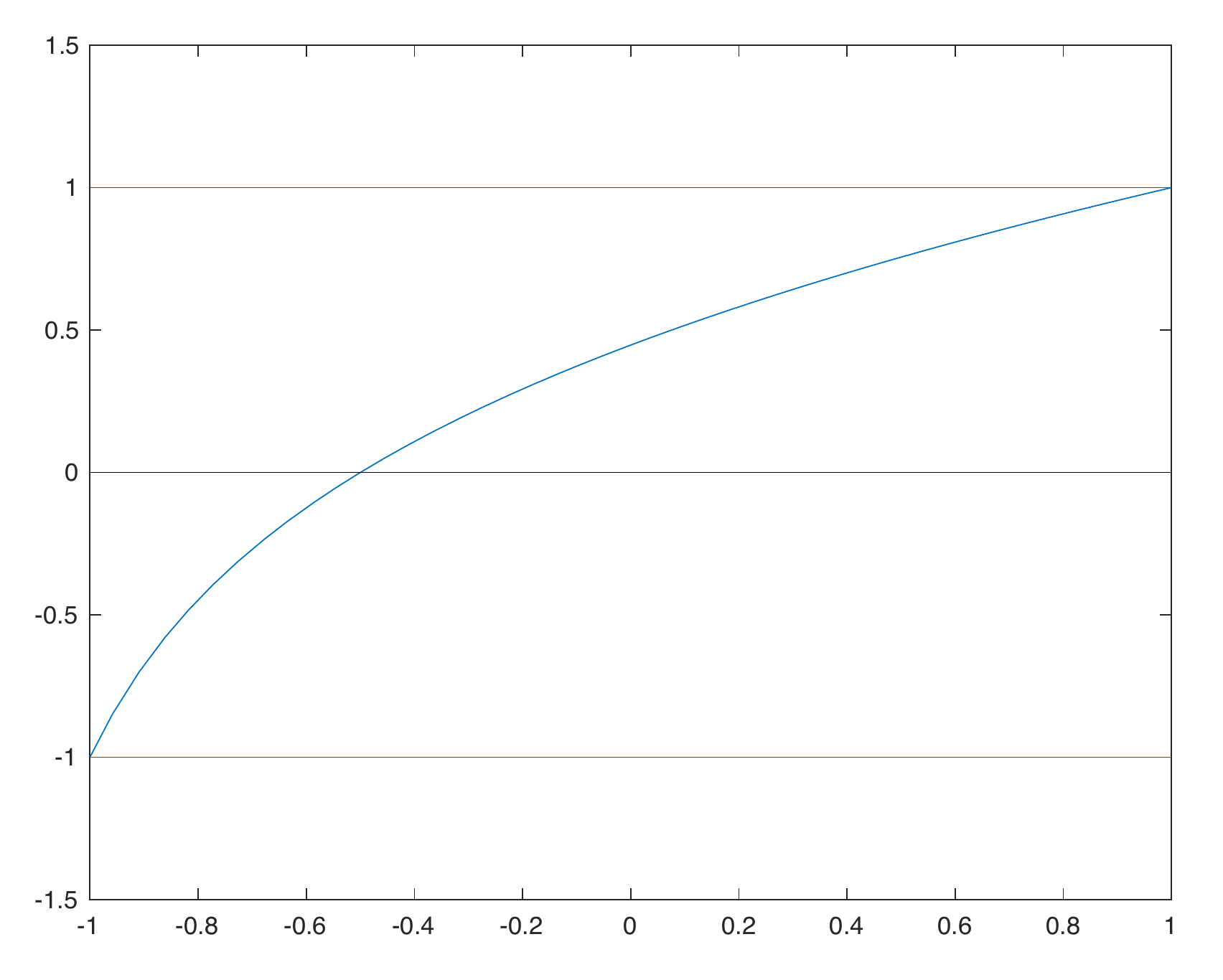}
\includegraphics[width=5cm]{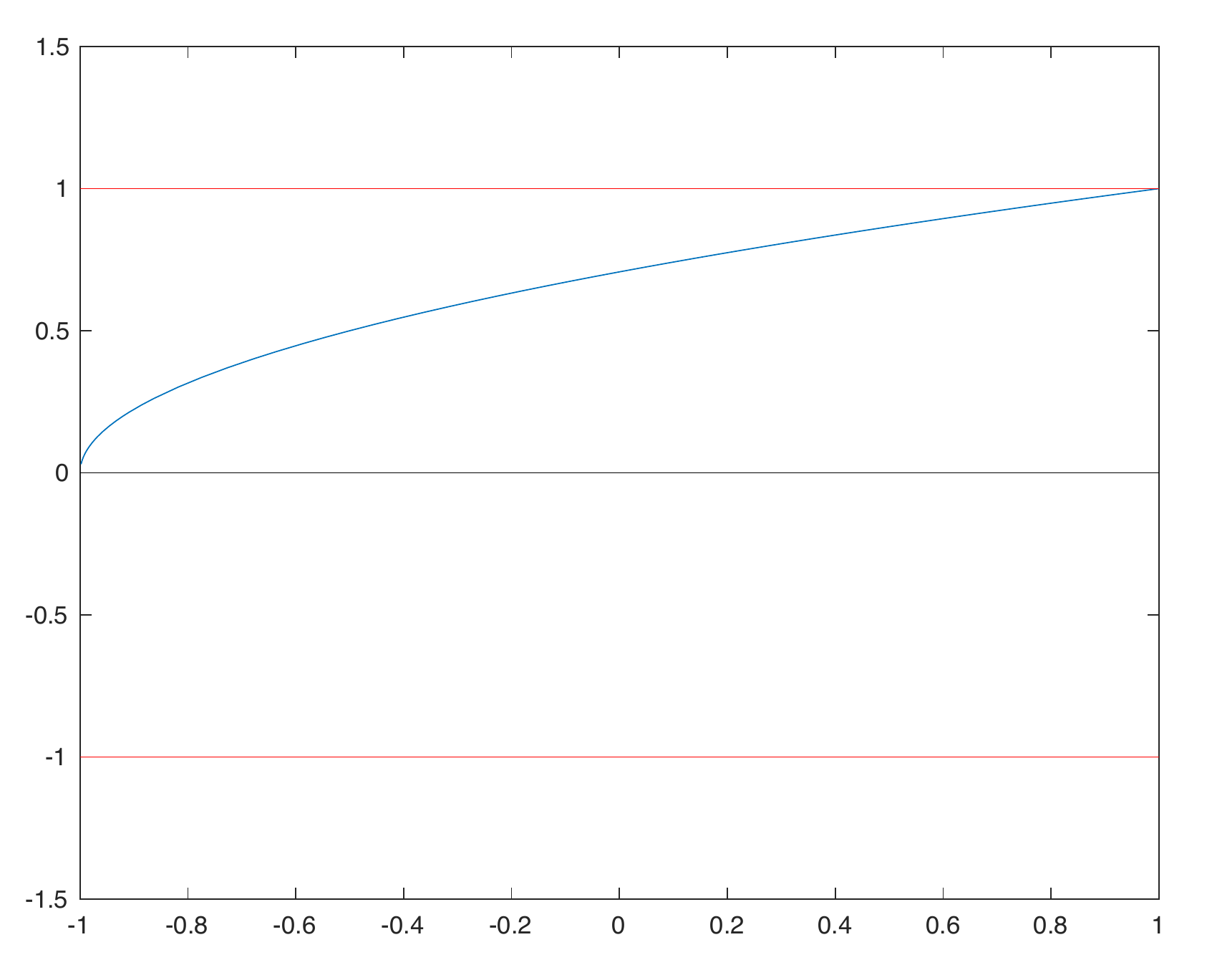}
\includegraphics[width=5cm]{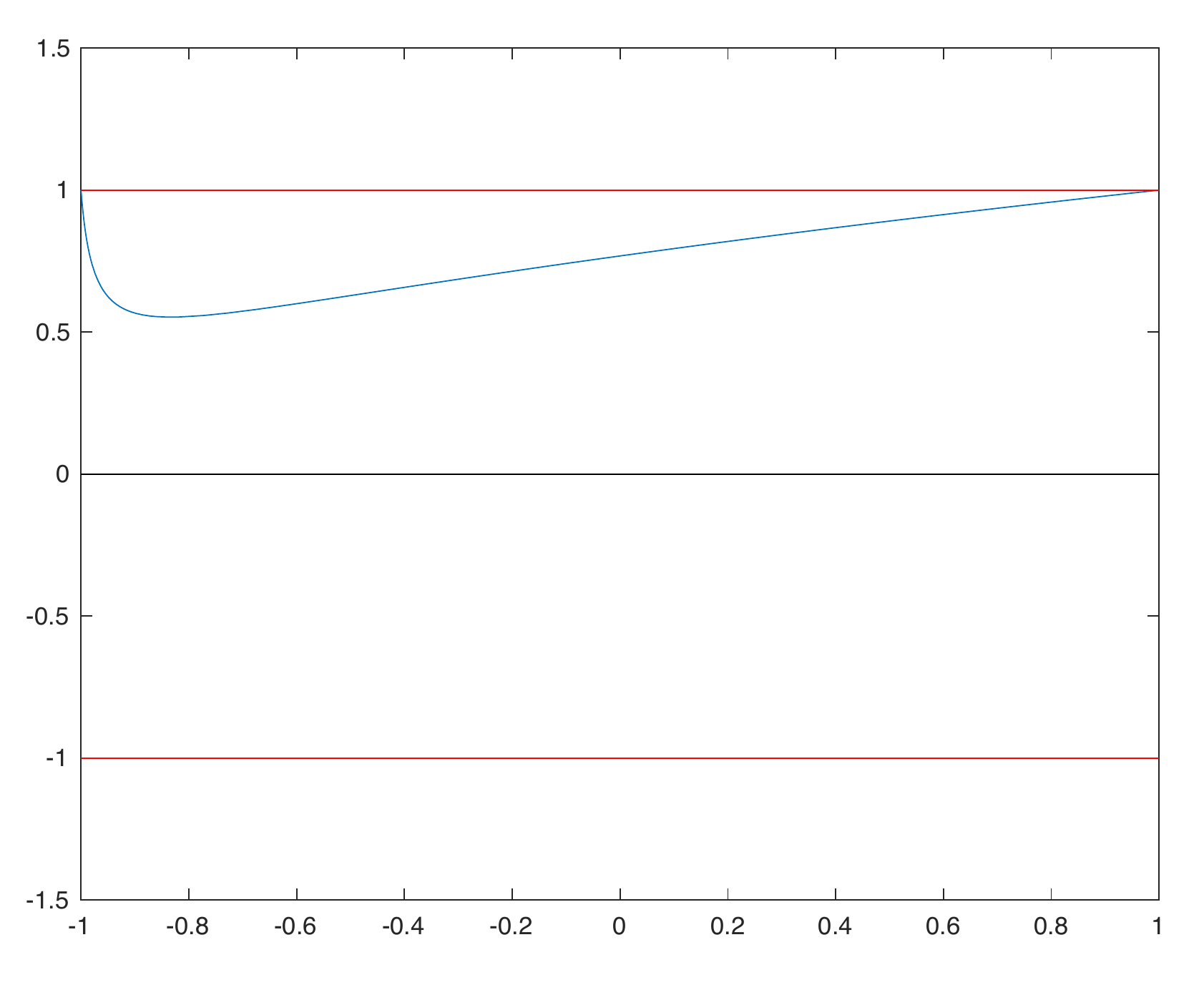}
\caption{Plot of $U(z)$, $z\in[-1,1]$, when $b$ is real: $b=-0.5$ (left), $b=-1$ (middle), $b=-1.2$ (right).}
\label{foncU}
\end{figure}

Our discussion of the asymptotics of the zeros of $F_n$, the Faber polynomials themselves, will involve the set $\mathcal A$ which is associated to the zeros of the shifted Faber polynomials $\hat F_n$. We separate into the {\it real} case ($\theta =0$) and the {\it complex} case ($\theta \not =0$) but a common ingredient will involve the circle
\begin{equation}\label{def-Cb} 
\mathcal C_b:=\{z\in \C: |z-c|=|b|/2\}=\{z\in \C: |V(z)|=|b|^2\}.\end{equation}
The equality in (\ref{def-Cb}) follows from the definitions of $V(z)$ and $c$. 
From our equations (\ref{exp-shift}) and (\ref{link-shift}), $F_n(z)=0$ holds if and only if
\begin{equation}\label{eq-Faber}
2T_n\left(\frac{W(z)}{V(z)^{1/2}}\right)=\left(\frac{-b}{V(z)^{1/2}}\right)^n.
\end{equation}
We isolate a simple but important observation from (\ref{def-Cb}):
\begin{proposition} \label{rhsmall} We have 
$$\left|\left(\frac{-b}{V(z)^{1/2}}\right)^n\right|\leq 1$$
if and only if $z$ lies outside or on $\mathcal C_b$.
\end{proposition}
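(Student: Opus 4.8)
The plan is to reduce the stated inequality to a distance condition on $z$ by taking moduli and using the linear form of $V$. First I would note that the assertion involves only $\bigl|(-b/V(z)^{1/2})^n\bigr|$, so the choice of branch for the square root is immaterial: in all cases $|V(z)^{1/2}| = |V(z)|^{1/2}$, and hence
$$\left|\left(\frac{-b}{V(z)^{1/2}}\right)^n\right| = \left(\frac{|b|}{|V(z)|^{1/2}}\right)^n.$$
Since $n\geq 1$ and the base is nonnegative, this is $\leq 1$ precisely when the base is $\leq 1$, i.e. precisely when $|b|^2 \leq |V(z)|$.

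Next I would use the factorization of $V$ that already underlies the second description of $\mathcal C_b$ in (\ref{def-Cb}). From $V(z) = b^2+1-2bz$ and the definition (\ref{def-c}) of $c$ one has $V(z) = -2b(z-c)$, so that $|V(z)| = 2|b|\,|z-c|$. Substituting and dividing by the positive quantity $2|b|$ (we have $b\neq 0$), the inequality $|b|^2\leq |V(z)|$ becomes equivalent to $|b|/2 \leq |z-c|$. Recognizing that $|z-c|\geq |b|/2$ is exactly the condition that $z$ lie outside or on the circle $\mathcal C_b = \{z:|z-c|=|b|/2\}$ then closes the chain of equivalences and gives the proposition.

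The argument is entirely elementary, and I anticipate no genuine obstacle; the only point requiring a moment's care is the branch independence of $|V(z)^{1/2}|$, which is immediate because passing to absolute values removes the sign ambiguity of the square root. This is why the statement is, as the authors put it, simple but important: it is a clean geometric reformulation of the modulus appearing in the zero equation (\ref{eq-Faber}) rather than a computation, and it is that reformulation—not any difficulty in deriving it—which will be the useful ingredient later.
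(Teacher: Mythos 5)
Your proof is correct and takes essentially the same route as the paper, which gives no separate argument but treats the proposition as an immediate consequence of the identity in (\ref{def-Cb}); that identity is exactly your factorization $V(z)=-2b(z-c)$, i.e.\ $|V(z)|=2|b|\,|z-c|$. Spelling out the reduction by moduli and the branch-independence of $|V(z)^{1/2}|$, as you do, is precisely the ``simple observation'' the authors leave implicit.
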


We will consider two subcases of our analysis of the asymptotics of the zeros of $F_n$ in each of the real and complex settings: whether or not the arc $\mathcal A$ and the circle $\mathcal C_b$ intersect. We next determine when this occurs.

\begin{lemma}\label{inter} The arc $\mathcal A$ and the circle $\mathcal C_b$ intersect if and only if $R\cos \theta \geq 3/2$. In this case, there is a single point of intersection
$$i_b:=b+\sqrt \rho be^{i\alpha}$$
where 
\begin{equation}\label{eq-rho} 
\rho =\frac{(b^2+\bar b^2-1)^2}{4|b|^4}\in [0,1]
\end{equation}
and where $x=e^{i\alpha}$ is the root of the equation
\begin{equation}\label{eq-phase}
x^2+2\sqrt \rho x +(1-1/b^2)=0
\end{equation}
of modulus one. This root is unique if $\rho\not = 0$. 

When  $R\cos \theta =3/2$ the point of intersection is $i_b= -1$ and when $b$ is real, $i_b=1/2b$. 
\end{lemma}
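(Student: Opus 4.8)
The plan is to convert the geometric condition $z\in\mathcal A\cap\mathcal C_b$ into a single algebraic equation in the parameter $\rho=U^2(z)$ and then determine exactly when it has a solution $\rho\in[0,1]$. By (\ref{arcA}) and Lemma~\ref{arc}, a point $z\in\mathcal A$ is characterized by $U^2(z)=(z-b)^2/V(z)=\rho$ for some $\rho\in[0,1]$, while by (\ref{def-Cb}) a point lies on $\mathcal C_b$ iff $|V(z)|=|b|^2$. At an intersection point both hold, so I would set $z-b=\sqrt\rho\,b\,x$; the identity $(z-b)^2=\rho V(z)$ then forces $V(z)=b^2x^2$, and consequently $z\in\mathcal C_b$ becomes simply $|x|=1$. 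Substituting $z=b+\sqrt\rho\,b\,x$ into $V(z)=b^2+1-2bz$ and equating with $b^2x^2$ produces precisely the quadratic (\ref{eq-phase}). Thus intersection points correspond exactly to those $\rho\in[0,1]$ for which (\ref{eq-phase}) has a root on the unit circle, and then $i_b=b+\sqrt\rho\,b\,e^{i\alpha}$ with $x=e^{i\alpha}$ that root.

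Next I would solve for the modulus-one root. Writing $\beta:=1-1/b^2$ (so that (\ref{eq-phase}) reads $x^2+2\sqrt\rho\,x+\beta=0$) and using $\bar x=1/x$ for $|x|=1$, multiplying (\ref{eq-phase}) by $\bar x$ gives $x+\beta\bar x=-2\sqrt\rho$; together with its conjugate $\bar\beta x+\bar x=-2\sqrt\rho$ this is a $2\times2$ linear system in $x,\bar x$ with determinant $1-|\beta|^2$, whence $x=-2\sqrt\rho\,(1-\beta)/(1-|\beta|^2)$. Imposing $|x|=1$ forces $4\rho|1-\beta|^2=(1-|\beta|^2)^2$, i.e.\ $$\rho=\frac{(1-|\beta|^2)^2}{4\,|1-\beta|^2}.$$ A short computation with $1-\beta=1/b^2$, $|1-\beta|^2=1/|b|^4$ and $1-|\beta|^2=(b^2+\bar b^2-1)/|b|^4$ then reduces this to the stated value (\ref{eq-rho}). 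For the converse direction one checks that, with this $\rho$, the displayed $x$ indeed has $|x|=1$ and satisfies $x+\beta\bar x=-2\sqrt\rho$; multiplying the latter by $x$ and using $|x|^2=1$ recovers (\ref{eq-phase}), so a genuine intersection point $i_b$ exists whenever this $\rho$ is admissible.

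It then remains to decide when $\rho\le1$ (the bound $\rho\ge0$ being automatic) and to settle uniqueness and the special values. Writing $b=u+iv$ with $u=\Re b<0$, the inequality $\rho\le1$, that is $|b^2+\bar b^2-1|\le2|b|^2$, collapses to $4u^2\ge1$, i.e.\ $u\le-1/2$; since $R\cos\theta=\Re a=1-\Re b$, this is exactly $R\cos\theta\ge3/2$, giving the claimed equivalence. For uniqueness, note $|\beta|=1\Leftrightarrow b^2+\bar b^2=1\Leftrightarrow\rho=0$; hence if $\rho\ne0$ the linear system is nonsingular and yields a single $x$, and since the two roots of (\ref{eq-phase}) have product $\beta$ the other root has modulus $|\beta|\ne1$, so the modulus-one root is unique, whereas if $\rho=0$ the intersection degenerates to the single point $z=b$. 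Finally, at the threshold $\rho=1$ equation (\ref{eq-phase}) becomes $(x+1)^2=1/b^2$, and the modulus-one root is $x=-1-1/b$, giving $i_b=b(1+x)=-1$; while in the real case $b<0$ the coefficients of (\ref{eq-phase}) are real so $x=\pm1$, and substituting into $i_b=b+\sqrt\rho\,b\,x$ gives $i_b=1/(2b)$.

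The main obstacle I anticipate is not a single hard step but making the chain of equivalences airtight: one must verify that solving the necessary conditions (the linear system together with $|x|=1$) does not overshoot, i.e.\ that the resulting $x$ genuinely satisfies (\ref{eq-phase}) and lies on $\T$ rather than producing a spurious pair of off-circle roots, and one must treat with care the singular regime $1-|\beta|^2=0$ (equivalently $\rho=0$), where the system degenerates and the intersection collapses to $z=b$. The algebraic reduction of $\rho$ to (\ref{eq-rho}) and of $\rho\le1$ to $\Re b\le-1/2$ is routine but is where the precise threshold $3/2$ is pinned down.
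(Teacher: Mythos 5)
Your proposal is correct, and its first step coincides with the paper's: the paper also writes $W(z)=\sqrt\rho\, b e^{i\alpha}$ at a putative intersection point and reduces membership in $\mathcal A\cap\mathcal C_b$ to the existence of a modulus-one root of (\ref{eq-phase}). After that, your route genuinely diverges in two places. To detect modulus-one roots, the paper first rules out two distinct modulus-one roots (for $\rho\neq0$) and conjugate-reciprocal root pairs, and then characterizes the existence of such a root by the vanishing of the resultant of $x^2+2\sqrt\rho x+d$ and its reciprocal $\bar d x^2+2\sqrt\rho x+1$, which yields (\ref{eq-rho}). You instead multiply (\ref{eq-phase}) by $\bar x$, adjoin the conjugate equation, and solve the resulting $2\times2$ linear system; this is more constructive, since it produces the explicit candidate root $x=-2\sqrt\rho\,(1-\beta)/(1-|\beta|^2)$ (with your $\beta=1-1/b^2$ equal to the paper's $d$), which you then reuse both for uniqueness and for the special values $i_b=-1$ and $i_b=1/2b$, and it sidesteps the need to exclude conjugate-reciprocal pairs. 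The second divergence is how the threshold is pinned down: the paper argues by continuity (for $|b|$ small the sets are disjoint, so the first contact as $|b|$ grows must occur at an endpoint $\pm1$ of $\mathcal A$, where $\rho=1$, forcing $\Re b=-1/2$), whereas you observe directly that admissibility of the value (\ref{eq-rho}), i.e.\ $\rho\le1$, is equivalent to $4(\Re b)^2\ge1$, hence to $\Re b\le-1/2$, hence to $R\cos\theta\ge3/2$. Your computation (which I checked: the two-sided inequality $|2u^2-2v^2-1|\le 2u^2+2v^2$ does collapse to $4u^2\ge1$) makes both directions of the ``if and only if'' airtight in a couple of lines, at the cost of losing the paper's geometric explanation of why the first contact occurs at $z=-1$.

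One spot to tighten: in the singular regime $1-|\beta|^2=0$ (equivalently $b^2+\bar b^2=1$, i.e.\ $\rho=0$) you assert that the intersection degenerates to the single point $z=b$, but you should also show that no intersection point with parameter $\rho'\in(0,1]$ can occur there. This is a short patch: if $x$ were a modulus-one root of (\ref{eq-phase}) for some $\rho'>0$, then since the product of the two roots is $\beta$ with $|\beta|=1$, both roots would lie on $\T$ and sum to the real number $-2\sqrt{\rho'}$, forcing them to be either conjugates (product $1\neq\beta$) or opposites (sum $0$, contradicting $\rho'>0$). Note that this degenerate case genuinely occurs — e.g.\ real $b=-1/\sqrt{2}$, where $i_b=1/2b=b$ — and your framework handles it consistently with the lemma's caveat that the root is unique only when $\rho\neq0$; this is in fact cleaner than the corresponding passage in the paper's proof, which dismisses the $\rho=0$ configuration outright.
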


\begin{proof} The condition that $z\in \mathcal A \cap \mathcal C_b$ entails
\begin{equation} \label{intersect} U^2(z)= \frac{W^2(z)}{V(z)}=\rho\in [0,1] \quad \hbox{and} \ |V(z)|=|b|^2. \end{equation}
Clearly then $|W(z)|=\sqrt \rho |b|$; and using the definitions $V(z)=b^2+1-2bz, \ W(z)=z-b$ from (\ref{VW}), 
$$V(z)+2bW(z)+(b^2-1)=0.$$
Replacing $V(z)$ by $W^2(z)/\rho$, we seek $z$ satisfying 
$$W^2(z)+2\rho bW(z)+\rho(b^2-1)=0 \quad \hbox{and} \quad |W(z)|=\sqrt \rho |b|.$$
Writing $W(z)=\sqrt \rho be^{i\alpha}$, we require $x=e^{i\alpha}$ to satisfy the quadratic equation
$$ x^2+2\sqrt \rho x +(1-1/b^2)=: x^2+2\sqrt \rho x +d= 0$$
which is (\ref{eq-phase}).

Using (\ref{eq-phase}) we show that (\ref{intersect}) has at most one solution. First, if (\ref{eq-phase}) has a solution  $x=e^{i\alpha}$ of modulus one then $z=b +\sqrt \rho be^{i\alpha}$ satisfies (\ref{intersect}) since $W(z)=z-b$. If (\ref{eq-phase}) has two distinct solutions  $x_1$ and $x_2$ of modulus one, since $x_1+x_2=-2\sqrt \rho \in \R$ we have either $x_1=-x_2$ or $x_1=\bar x_2$. If $x_1=-x_2$ then $\rho =0$ so that $z=b$; then $V(b)=0$ which gives $b=0$ which is impossible. If $x_1=\bar x_2$, then the product $x_1x_2= 1 = d=1-1/b^2$ which is impossible. 

Next we claim that (\ref{eq-phase}) cannot have (conjugate) reciprocal solutions $x=\beta e^{i\alpha}$ and $1/\bar x= \beta^{-1} e^{-i\alpha}$ with $\beta \not = 1$. For the sum $x+1/\bar x$ has $\Im (x+1/\bar x)=(\beta - \beta^{-1})\sin \alpha$ which vanishes if and only if $\alpha =0$; this implies $x$ and $1/\bar x=1/x$ are real with $x\cdot 1/x=1=  d=1-1/b^2$ which is impossible. We conclude that (\ref{eq-phase}) has a root of modulus one if and only if the polynomial $x^2+2\sqrt \rho x +d$ and its reciprocal $\bar dx^2+2\sqrt \rho x +1$ share a common root; i.e., if the resultant of these polynomials vanishes. A calculation gives that the vanishing of the resultant is equivalent to
$$4\rho(1-2\Re d + |d|^2)=(1-|d|^2)^2.$$ 
Using $d=1-1/b^2$ and rewriting this in terms of $b$, we have
$$ \frac{4\rho}{|b|^4}=\frac{1}{|b|^8}(\bar b^2+b^2-1)^2; \ \hbox{i.e.}, \  4\rho |b|^4=(\bar b^2+b^2-1)^2$$
which is (\ref{eq-rho}).

Note that if $|b|$ is small, the center $c=\frac{1}{2}(b+1/b)$ of $\mathcal C_b$ has large modulus. On the other hand, when $|b|$ is small $U(z)$ is very close to the identity and $\mathcal A$ stays in a fixed bounded region. Thus $\mathcal A \cap \mathcal C_b = \emptyset$ for such $b$. We characterize the values of $b$ which correspond to the first time(s) when $|b|$ is sufficiently large so that these sets intersect at a point. When this happens, by continuity this first intersection point must be at an endpoint of $\mathcal A$; i.e., at $1$ or $-1$. Then $\rho = U^2(z)=1$ and (\ref{eq-rho}) becomes $(b\pm \bar b)^2=1$ which  gives $\Re(b)=\pm 1/2$. Since we require $\Re b <0$ we must have $\Re(b)=-1/2$; i.e., $R\cos \theta = 3/2$. Using $\rho =1$ in (\ref{eq-phase}) we get, apriori, the roots $1/b -1$ and $-1/b-1$. We require the root to have modulus one and $|1/b -1|=1$ 
implies $|1-b|=|b|$ which cannot occur if $\Re b <0$. Finally we arrive at the root $e^{i\alpha} = -1/b-1$ which gives the (first) intersection point at $z=b+be^{i\alpha}=b+b(-1/b-1)=-1$ as required. 

If $b$ is real, using (\ref{eq-rho}) gives $i_b=b+\sqrt \rho b=1/2b$.
\end{proof}

If $R\cos \theta \geq 3/2$, the mapping 
$$\phi_b(z):= \frac{b-z-\sqrt{z^2-1}}{b}=\frac{b-J^{-1}(z)}{b},$$
will be useful  in the next sections.
If $b\not\in (-\infty, -1]$, we take the simple arc $\mathcal A$ as a branch cut $C$ for the square root; for $b$ real we take $C=[-1,1]$. Giving $C$ a positive orientation from $-1$ to $1$, for $x \in C$ we write 
$(\phi_b)_+(x)$ and $(\phi_b)_-(x)$ for the limits of $\phi_b(z)$ as $z\to x$ from the two sides of $C$. Note that $\phi_b(z)\not =1$ since $z+\sqrt{z^2-1}\not = 0$; but there exist $z$ with $|\phi_b(z)|=1$ and these points will be of interest. 
Define the curve
\begin{equation}\label{lb} 
\LL_{b}^{+}:=\{z\in \C: |\phi_b(z)|=1\}=\{z\in \C: |b-z-\sqrt{z^2-1}|=|b|\}.
\end{equation}
This is a loop (closed curve) which is a portion of the curve
$$
\LL_{b}:=\LL_{b}^{+}\cup\LL_{b}^{-},\qquad \hbox{where} \ \LL_{b}^{-}:=\{z\in \C: |b-z+\sqrt{z^2-1}|=|b|\}.
$$
The curve $\LL_{b}$, along with other curves of interest, are depicted in Figure \ref{curveLb}.
\begin{figure}[htb]
\centering
\def\svgwidth{14cm}
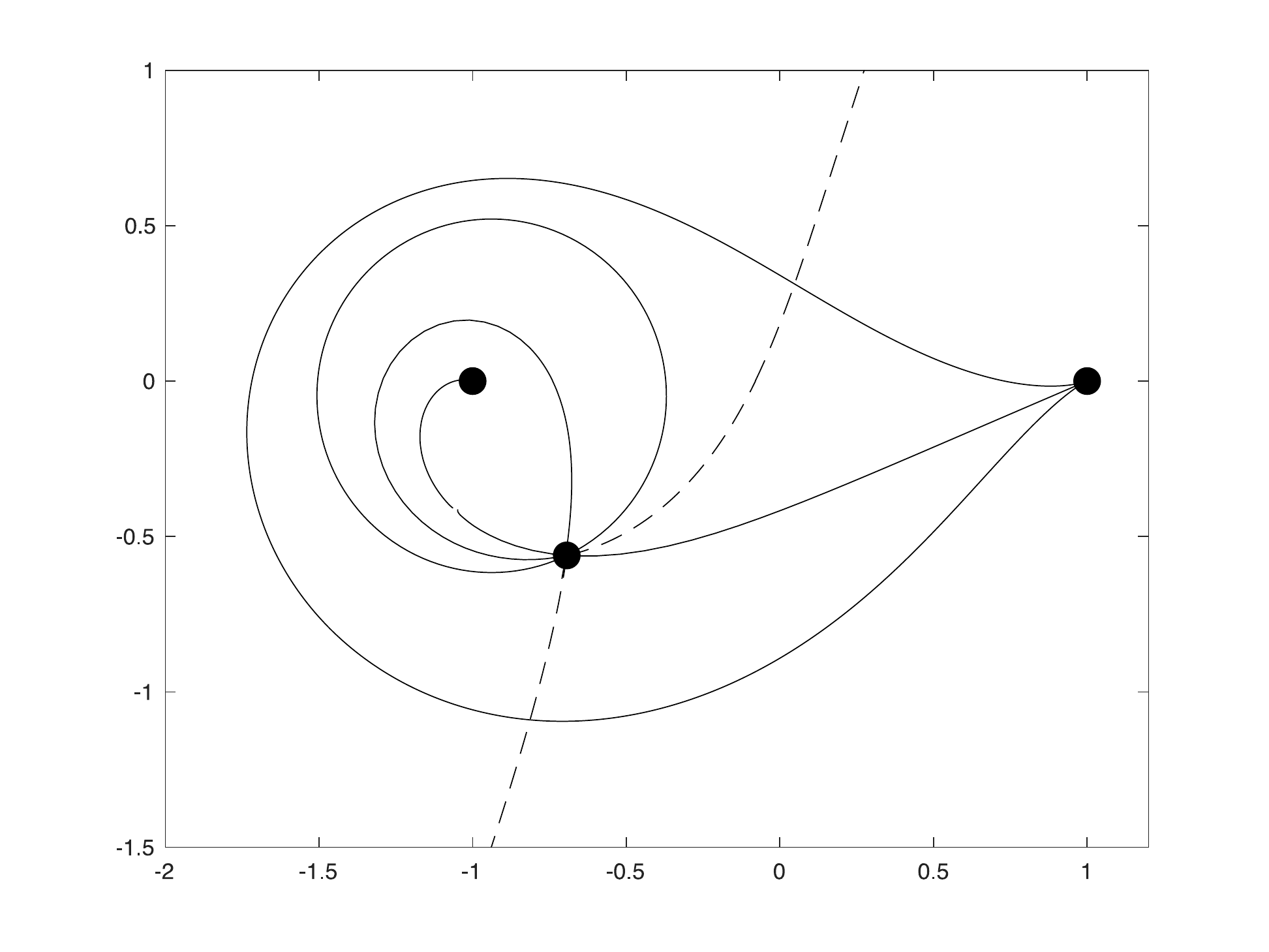
\caption{A Joukowski airfoil ($R=2.1$, $\theta=0.2$), along with the circle $\CC_{b}$, the arc $\AA$, the curve $\LL_{b}=\LL_{b}^{+}\cup\LL_{b}^{-}$. The loop $\LL_{b}^{+}$ lies inside the circle $\CC_{b}$; the remaining part $\LL_{b}^{-}$ of $\LL_{b}$ lies outside.}
\label{curveLb}
\end{figure}
We describe some of the properties of $\LL_{b}$ in the next lemma.
\begin{lemma}
The curve $\LL_{b}$ has a unique point of intersection with the circle $\mathcal C_b$, which is
 the point $i_b$ from Lemma \ref{inter}. The point $i_{b}$ is a double point of $\LL_{b}$, and it is also the unique point of intersection of $\LL_{b}$ with the curve $\AA$. The loop $\LL_{b}^{+}$ is the portion of $\LL_{b}$ which lies inside $\mathcal C_{b}$. When $b$ is real, $i_b = 1/2b$ and $\LL_b$ is symmetric about the real axis. 
\end{lemma}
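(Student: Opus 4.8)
The plan is to push everything through the Joukowski preimage variable and exploit a factorization of $V$. Writing $\zeta_\pm = z \pm \sqrt{z^2-1}$ for the two branches of $J^{-1}$, so that $\zeta_+\zeta_- = 1$ and $\zeta_+ + \zeta_- = 2z$, a direct computation gives
$$V(z) = b^2 + 1 - 2bz = (b-\zeta_+)(b-\zeta_-).$$
Setting $\phi_b = (b-\zeta_+)/b$ (the map already defined) and $\psi_b = (b-\zeta_-)/b$, this reads $V(z)/b^2 = \phi_b(z)\psi_b(z)$, and the three relevant curves acquire the uniform description
$$\LL_b^+ : |\phi_b| = 1, \qquad \LL_b^- : |\psi_b| = 1, \qquad \CC_b : |\phi_b\psi_b| = 1,$$
where the last uses $|V|/|b|^2 = |\phi_b\psi_b|$ from (\ref{def-Cb}). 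I also record $\phi_b + \psi_b = -2W/b$ and $\phi_b\psi_b = V/b^2$, whence $U^2 = W^2/V = (\phi_b+\psi_b)^2/(4\phi_b\psi_b) = \tfrac14(\eta + \eta^{-1})^2$ with $\eta^2 = \phi_b/\psi_b$; thus $U = \pm J(\eta)$.

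The first key step is the chain of set equalities
$$\LL_b\cap\CC_b = \LL_b^+\cap\LL_b^- = \LL_b\cap\AA = \AA\cap\CC_b.$$
Each follows formally from the modulus conditions above: if $z$ lies on $\CC_b$ (so $|\phi_b||\psi_b|=1$) and on one of $\LL_b^\pm$ (so one of the moduli is $1$), then both moduli equal $1$, i.e. $z\in\LL_b^+\cap\LL_b^-$; conversely $|\phi_b|=|\psi_b|=1$ forces membership in $\CC_b$ and in $\LL_b$. Likewise, membership in $\AA$ is equivalent to $|\phi_b| = |\psi_b|$, since $U\in[-1,1]$ iff $J(\eta)\in[-1,1]$ iff $|\eta|=1$ iff $|\phi_b|=|\psi_b|$; intersecting $\AA$ with $\LL_b$ or with $\CC_b$ again yields exactly $\{|\phi_b|=|\psi_b|=1\}$. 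As $R\cos\theta\ge 3/2$ is in force, Lemma \ref{inter} identifies $\AA\cap\CC_b$ as the single point $i_b$, and the displayed equalities give the first and third assertions of the lemma at once.

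For the double-point claim I would argue with $\LL_b = J(\Gamma_b)$, where $\Gamma_b = \{\zeta : |\zeta - b| = |b|\}$ is the circle through the origin; this holds because $z\in\LL_b$ iff one of its two $J$-preimages $\zeta_\pm$ lies on $\Gamma_b$. A self-intersection of $J(\Gamma_b)$ occurs exactly when $\zeta$ and $1/\zeta$ both lie on $\Gamma_b$, i.e. when $\zeta\in\Gamma_b\cap\ell_b$ with $\ell_b = \{\zeta : \Re(b\zeta)=1/2\}$, the image of $\Gamma_b$ under inversion $\zeta\mapsto 1/\zeta$; such a reciprocal pair maps to a point of $\LL_b^+\cap\LL_b^-$, which we have just shown is the single point $i_b$. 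Hence $\LL_b$ is a figure-eight whose only crossing is the double point $i_b$, with $\LL_b^+$ and $\LL_b^-$ its two loops.

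Finally, for the inside/outside statement I would use that $\LL_b^+\setminus\{i_b\}$ is a connected arc (the loop minus its crossing point) meeting $\CC_b$ only at $i_b$; by Proposition \ref{rhsmall} the complement of $\CC_b$ splits into $\{|V|<|b|^2\}$ and $\{|V|>|b|^2\}$, so $\LL_b^+\setminus\{i_b\}$ lies entirely on one side, and evaluating $|\psi_b|$ (equivalently $|V|/|b|^2$, since $|\phi_b|=1$ there) at a single convenient point of $\LL_b^+$ shows it lies in $\{|V|<|b|^2\}$, i.e. inside $\CC_b$; the same argument places $\LL_b^-\setminus\{i_b\}$ outside. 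The symmetry statement is immediate: for real $b$ the circle $\Gamma_b$ is symmetric about $\R$ and $J$ commutes with complex conjugation, so $\LL_b = J(\Gamma_b)$ is symmetric about $\R$, while $i_b = 1/2b$ is exactly the real-case value computed in Lemma \ref{inter}. I expect the main obstacle to be the inside/outside determination in this last step—pinning down the correct side requires either a clean choice of test point or a monotonicity argument for $|\psi_b|$ along $\Gamma_b$—together with the bookkeeping needed to be sure $\LL_b^+$ is genuinely a simple loop as $z$ crosses the branch cut of $\sqrt{z^2-1}$.
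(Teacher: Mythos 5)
Your treatment of the intersection statements is correct, and it is organized differently from the paper's. The paper pulls everything back to the $\zeta$-plane: the preimage of $\LL_b$ under $J$ is the circle $\Gamma_b=\{\zeta:|\zeta-b|=|b|\}$, the preimage of $\CC_b$ is $\{\zeta: |\zeta-b|\,|1-1/(\zeta b)|=|b|\}$, so points of $\LL_b\cap\CC_b$ come from $\Gamma_b\cap\{\zeta:|\zeta-1/b|=|\zeta|\}$, a circle--line intersection with at most two, mutually reciprocal, solutions; this gives uniqueness and the double-point property at once, and then $i_b\in\LL_b$ is checked by the modulus computation using $W^2(i_b)=\rho V(i_b)$ and $|V(i_b)|=|b|^2$. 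You instead stay in the $z$-plane and obtain all four identities $\LL_b\cap\CC_b=\LL_b^+\cap\LL_b^-=\LL_b\cap\AA=\AA\cap\CC_b=\{|\phi_b|=|\psi_b|=1\}$ from the factorization $V=b^2\phi_b\psi_b$ and the (correct) characterization $\AA=\{z:|\phi_b(z)|=|\psi_b(z)|\}$, importing uniqueness from Lemma \ref{inter} rather than re-deriving it; your $\zeta$-plane argument for the double point is then essentially the paper's. This is a legitimate and clean shortcut (both you and the paper pass silently over the degenerate case $R\cos\theta=3/2$, where the reciprocal pair collapses to $\zeta=-1$).

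The genuine gap is exactly where you predicted: deciding which side of $\CC_b$ the loop $\LL_b^+$ lies on. The connectedness reduction is fine (and is also the paper's), but ``evaluate $|\psi_b|$ at a convenient point of $\LL_b^+$'' is not routine, because for complex $b$ you cannot certify that a point lies on $\LL_b^+$ rather than $\LL_b^-$ by looking at the modulus of a preimage: the labels $\phi_b,\psi_b$ are tied to the branch cut $\AA$, not to the exterior/interior branch of $J^{-1}$. Concretely, the natural test point $z=J(2b)$ (preimage $2b\in\Gamma_b$, antipodal to $0$) has $|V(z)|=|b^2-\tfrac12|$, so it lies inside $\CC_b$ if and only if $\Re (b^2)>1/4$; for admissible $b$ with $|\Im b|\geq|\Re b|$ one has $\Re( b^2)\leq 0$, so this point is outside $\CC_b$ and hence on $\LL_b^-$, even though for real $b$ it lies on $\LL_b^+$. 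A test point chosen without tracking the cut can thus silently land on the wrong loop. The one-line fix, which is also what the paper's ``since the point at infinity belongs to $\LL_b^-$'' step tacitly uses, is to run the membership test along $\Gamma_b$ itself: for $\zeta\in\Gamma_b$, the point $J(\zeta)$ lies inside $\CC_b$ iff $|b-1/\zeta|<|b|$ iff $|\zeta-1/b|<|\zeta|$ iff $\Re(b\zeta)>1/2$. Hence the line $\ell_b$ cuts $\Gamma_b$ into two arcs, one mapping strictly inside $\CC_b$ and the other strictly outside, so the two loops of the figure-eight lie on \emph{opposite} sides (knowing only that $\infty\in\LL_b^-$ would not by itself exclude both loops being outside). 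Since $0\in\Gamma_b$ satisfies $\Re(b\cdot 0)<1/2$ and $J(0)=\infty\in\LL_b^-$, the outside loop is $\LL_b^-$ and therefore $\LL_b^+$ is inside. Note finally that the fact you already proved, $\LL_b\cap\AA=\{i_b\}$, is what settles your ``simple loop'' bookkeeping worry: each loop minus $i_b$ never meets the cut, so $\phi_b$ and $\psi_b$ do not swap along it, and each loop lies wholly in $\LL_b^+$ or wholly in $\LL_b^-$.
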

\begin{proof} 
The preimage of $\LL_{b}$ under the Joukowski map 
$z=J(\zeta)$ is the circle $|b-\zeta|=|b|$, while the preimage of $\CC_{b}$ is the curve
$$
\left|\zeta-b+\left(\frac1\zeta-\frac1b\right)\right|=|\zeta-b|\left|1-\frac{1}{\zeta b}\right|=|b|.
$$
Hence, $\LL_{b}$ and $\CC_{b}$ intersect if and only if
$$|b-\zeta|=|b| \ \hbox{and} \ |\zeta b-1|=|\zeta b|\quad\text{i.e., }\quad |b-\zeta|=|b| \ \hbox{and} \ |\zeta-b^{-1}|=|\zeta|.$$
Since a circle and a line intersect at most twice, there are at most two solutions to the above system of equations, which are easily seen to be reciprocals of each other.
Thus, these two solutions are mapped by $J$ to the same point, the unique point of intersection of $\LL_{b}$ and $\CC_{b}$, which, moreover, has to be a double point of $\LL_{b}$. 
Furthermore, this point equals $i_{b}$. Indeed, on the one hand, $i_{b}\in\CC_{b}$. On the other hand, $i_{b}\in\AA$ which implies that $W^{2}(i_{b})=\rho V(i_{b})$, $\rho\in[0,1]$ and then
\begin{multline*}
|b-i_{b}\pm\sqrt{i_{b}^{2}-1}|=|W(i_{b})\mp\sqrt{W^{2}(i_{b})-V(i_{b})}|
\\
=\left|\frac{W(i_{b})}{\sqrt{V(i_{b})}}\mp\sqrt{\frac{W^{2}(i_{b})}{V(i_{b})}-1}\right|
\left|\sqrt{V(i_{b})}\right|=\left|\pm\sqrt{\rho}\mp\sqrt{\rho-1}\right|\left|\sqrt{V(i_{b})}\right|=1\cdot|b|=|b|,
\end{multline*}
so that $i_{b}\in\LL_{b}$. The above computation also shows that a point of intersection of $\AA$ and $\LL_{b}$ must lie on $\CC_{b}$ and hence coincide with $i_{b}$.

Now, recalling that $\AA$ (or its subarc $[-1,1]$ when $b\in(-\infty,1]$) was chosen as the branch cut in the definition of $\phi_{b}$, one concludes that $\LL_{b}^{+}$ is the portion of $\LL_{b}$ that either lies entirely inside or entirely outside of $\CC_{b}$. Since the point at infinity belongs to $\LL_{b}^{-}$, one concludes that $\LL_{b}^{+}$ is the portion of $\LL_{b}$ that lies entirely inside $\CC_{b}$.

For $b$ real, the symmetry of $\LL_b$ about the real axis is clear.
\end{proof}

To describe the asymptotics of the normalized counting measures $\{\mu_n\}$ of the zeros of $\{F_n\}$, we will need the equilibrium measure of the unit circle, $\T$:
$$\eta:= \mu_T=\frac{1}{2\pi}d\theta$$
and the equilibrium measure of the interval $[-1,1]$:
$${\mu_{[-1,1]}= \frac{1}{\pi}\frac{dx}{\sqrt{1-x^2}}.}$$
We recall that the normalized counting measures of the Chebyshev polynomials $\{T_n\}$ converge weak-* to $\mu_{[-1,1]}$.

Finaly, given a measurable map $f:A\to B$ between two measure spaces and a measure $\nu$ on $A$, we write $f_*(\nu)$ for the push-forward measure of $\nu$ under $f$.

\section{Zero distribution: the real case}

In this section, we assume $\theta=0$, i.e., $b<0$; the real case. The zero distribution of some Faber polynomials in this case are shown in Figure \ref{zeros-sym}.
 \begin{figure}[!tb]
 \centering
\vspace{-1cm}
\includegraphics[width=7.5cm]{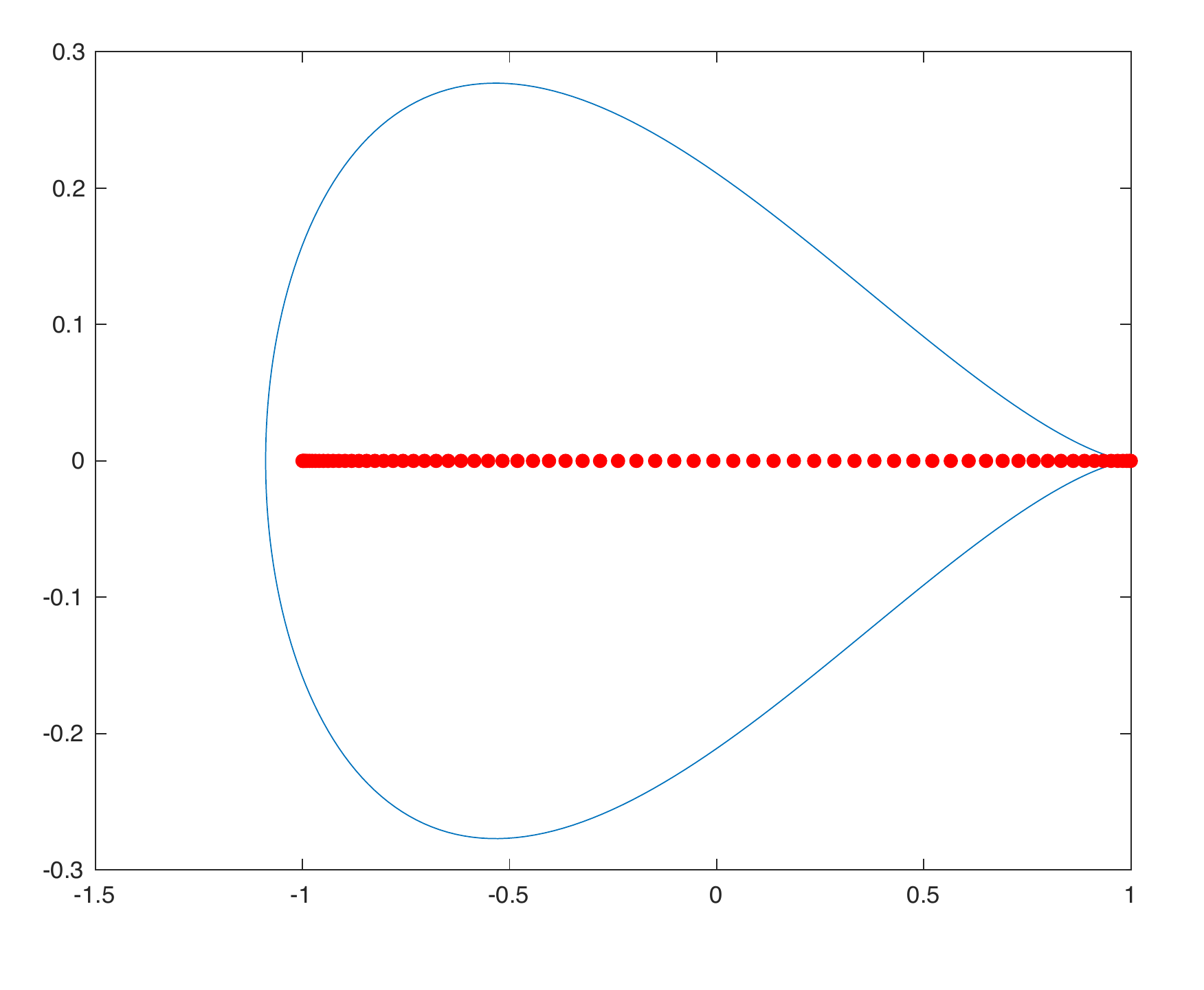}
\includegraphics[width=7.5cm]{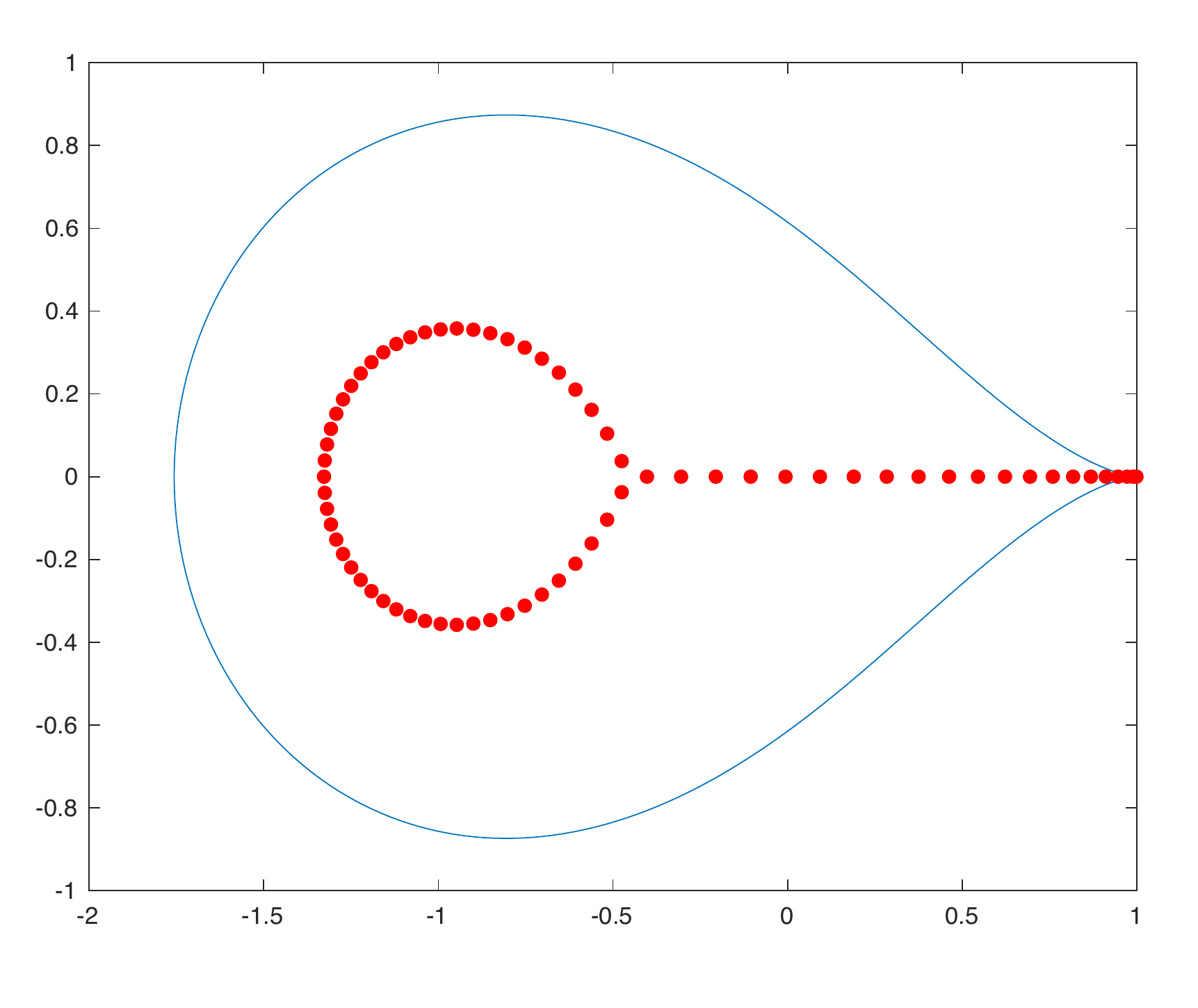}
\caption{Zero distribution of the Faber polynomials $F_{n}(z)$ in the real case. The degree $n=70$, and $R=1.26$ (left), $R=2.1$ (right).}
\label{zeros-sym}
\end{figure}

From Lemma \ref{inter}, we distinguish two subcases: $1<R \leq 3/2$ and $R>3/2$. 

\begin{theorem}\label{realullman} For $1<R \leq 3/2$, all zeros of $F_n(z)$ lie in $[-1,1]$ and 
$$\lim_{n\to \infty} \mu_n = (U^{-1})_*(\mu_{[-1,1]}) \quad \hbox{weak-*},$$
where the push-forward measure of $\mu_{[-1,1]}$ by $U^{-1}$ admits the following explicit expression:
\begin{equation}\label{push-expl}
(U^{-1})_*(\mu_{[-1,1]})=\frac1\pi\frac{1}{\sqrt{1-x^{2}}}\bigl(\frac{1-bx}{1+b^{2}-2bx}\bigr)dx,\qquad x\in(-1,1).
\end{equation}
\end{theorem}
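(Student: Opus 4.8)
The plan is to exploit the real structure of the problem. When $\theta=0$ we have $b<0$, so by (\ref{use?}) the $F_n$ have real coefficients and $F_n(x)\in\R$ for $x\in\R$; moreover $1<R\le 3/2$ means $b=1-R\in[-1/2,0)$, which puts us in Case 1 with $\mathcal A=[-1,1]$ and $U:[-1,1]\to[-1,1]$ an increasing bijection. First I would record that on $[-1,1]$ the linear function $V(x)=b^2+1-2bx$ is strictly positive (its only real root is $c<-1$), so the map $\psi(x):=\arccos U(x)$ is a continuous strictly decreasing bijection of $[-1,1]$ onto $[0,\pi]$, and (\ref{exp-shift})--(\ref{eq-Faber}) reduce for real $x$ to
$$F_n(x)=a^{-n}\bigl(2\,V(x)^{n/2}\cos(n\psi(x))-(-b)^n\bigr).$$
Since $a=R>0$ and $(-b)^n>0$, the sign of $F_n(x)$ is that of $2\cos(n\psi(x))-r(x)^n$, where $r(x):=(-b)/V(x)^{1/2}\in(0,1]$.

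The bound $r\le 1$ on $[-1,1]$ is where I would invoke Proposition \ref{rhsmall}: because $c<-1$, the point of $[-1,1]$ nearest to the center of $\mathcal C_b$ is $-1$, and a short computation gives $|{-1}-c|=(b+1)^2/(2|b|)$, which is $\ge |b|/2$ exactly when $b\ge-1/2$, i.e. when $R\le 3/2$. Thus $[-1,1]$ lies outside or on $\mathcal C_b$, so $r\le 1$ there. Evaluating $F_n$ at the $n+1$ points $x_k$ with $\psi(x_k)=k\pi/n$ ($k=0,\dots,n$) then gives $2(-1)^k-r(x_k)^n$, which has sign $(-1)^k$ since $0<r^n\le1<2$ (the boundary case $R=3/2$, where $r(-1)=1$, is covered). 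Hence $F_n$ changes sign $n$ times on $(-1,1)$ and has at least, and therefore — as $\deg F_n=n$ — exactly $n$ zeros there. This proves all zeros lie in $[-1,1]$, and that exactly one zero $x_j$ falls in each window on which $\psi$ sweeps an interval of length $\pi/n$.

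For the limit I would transport everything through $\psi$. The interlacing just obtained means the values $\zeta_j:=\psi(x_j)$ satisfy $|\zeta_j-(j-\tfrac12)\pi/n|\le\pi/(2n)$, so $\psi_*\mu_n=\frac1n\sum_j\delta_{\zeta_j}\to\frac1\pi\,d\psi$ weak-* on $[0,\pi]$ by a Riemann-sum estimate. Applying the homeomorphism $\psi^{-1}=U^{-1}\circ\cos$ and using that $\cos_*(\frac1\pi d\psi)=\mu_{[-1,1]}$ yields $\mu_n\to\psi^{-1}_*(\frac1\pi d\psi)=(U^{-1})_*(\mu_{[-1,1]})$ weak-*. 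Finally (\ref{push-expl}) follows from the substitution $u=U(x)$ in $\mu_{[-1,1]}=\frac1\pi\frac{du}{\sqrt{1-u^2}}$ together with the two elementary identities $U'(x)=(1-bx)/V(x)^{3/2}$ and $1-U(x)^2=(1-x^2)/V(x)$, which combine to give density $\frac1\pi\frac{1}{\sqrt{1-x^2}}\frac{1-bx}{1+b^2-2bx}$ (note $1-bx>0$, so $U'>0$, consistent with monotonicity).

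I expect the only genuine obstacle to be the zero-localization step: turning the oscillation of $\cos(n\psi)$ into a count that pins down all $n$ zeros on $[-1,1]$ while ruling out complex strays. The two delicate points are the sign bound $r\le1$ (which ties the hypothesis $R\le 3/2$ to Proposition \ref{rhsmall} via the geometry of $\mathcal C_b$) and the exclusion of zeros off $[-1,1]$; both are dispatched by the real-coefficient sign-change argument combined with the degree count. The equidistribution of $\{\zeta_j\}$ and the change-of-variables computation are then routine.
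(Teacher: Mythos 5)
Your proof is correct and follows essentially the same route as the paper: the sign-change argument at the $n+1$ alternation points of $T_n(U(x))$ (valid because $\left|(-b)/V(x)^{1/2}\right|\le 1$ on $[-1,1]$ exactly when $R\le 3/2$, which is the content of Lemma \ref{inter} combined with Proposition \ref{rhsmall}, a fact you re-derive directly), yielding exactly $n$ real zeros by the degree count, followed by interlacing-based equidistribution transported through the monotone map $U$ and the identical density computation $\frac1\pi U'(x)/\sqrt{1-U^2(x)}$. Your $\arccos$ parametrization and explicit Riemann-sum step simply make precise what the paper invokes more tersely as the weak-* convergence of the Chebyshev zero counting measures to $\mu_{[-1,1]}$ together with monotonicity of $U$.
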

\begin{proof} In this subcase, $0> b=1-R \geq -1/2$ so that $\mathcal A=[-1,1]$ and $U:[-1,1] \to [-1,1]$ is a one-to-one, onto, increasing map. Moreover, since $R \leq 3/2$, from Lemma \ref{inter}, $[-1,1]\cap \mathcal C_b$ is empty or consists of the point $-1$ so that, using Proposition \ref{rhsmall}, 
$$\left|\frac{b}{V(z)^{1/2}}\right|\leq 1 \quad \hbox{for} \ z\in [-1,1].$$
We adapt the argument of \cite[p. 422]{U2}, (see also Section \ref{quadrat} on Chebyshev quadrature below). The values of the Chebyshev polynomial $T_n(x)$ for $x\in [-1,1]$ oscillate between $-1$ and $1$, taking these values $n$ times each, at $x=\cos t, \ t={2k\pi}/{n}, \ k=0,1,...,n-1$ for the value $1$ and at $x=\cos t, \ t={(2k+1)\pi}/{n}, \ k=0,1,...,n-1$, for the value $-1$. It follows that between each $n$ pairs of oscillations, i.e., between $\cos ({2k\pi}/{n})$ and $\cos ({(2k+1)\pi}/{n})$, for each $z\in [-1,1]$ there is at least one value of $x$ so that $2T_n(x)=\left({-b}/{V(z)^{1/2}}\right)^n$ (as well as a zero of $T_n(x)$). Recalling from (\ref{eq-Faber}) that $F_n(z)=0$ if and only if 
$$2T_n(U(z))=\left(\frac{-b}{V(z)^{1/2}}\right)^n$$
and using the fact that $U:[-1,1] \to [-1,1]$ is monotone, we get exactly $n$ distinct solutions $z_1^{(n)},\ldots,z_n^{(n)}\in [-1,1]$ of this last equation; i.e., $n$ distinct zeros of $F_n$. Moreover, by the monotonicity of $U$ on $[-1,1]$ and the weak-* convergence of the normalized zero measures of the Chebyshev polynomials $\{T_n\}$  to $\mu_{[-1,1]}$, we conclude that $\mu_n$ converges weak-* to $(U^{-1})_*(\mu_{[-1,1]})$. Formula (\ref{push-expl}) comes from the fact that, by definition of the push-forward measure, the density of $(U^{-1})_*(\mu_{[-1,1]})$ with respect to $dx$ equals
$$
\frac1\pi\frac{U'(x)}{\sqrt{1-U^{2}(x)}},
$$
which is easily seen to be equal to the expression in the right-hand side of (\ref{push-expl}).
\end{proof}

We introduce some notation for the second case, $R>3/2$. Recall that 
$$\LL_{b}^{+}=\{z\in \C: |\phi_b(z)|=1\}=\{z\in \C: |b-z-\sqrt{z^2-1}|=|b|\}$$
is a loop which is symmetric about the real axis and contains the point $i_b = 1/2b$ where it has a corner. 
An example of such a loop is depicted in Figure \ref{loop} when $R=2.1$.
\begin{figure}[!tb]
 \centering
\vspace{-1cm}
\includegraphics[width=8cm]{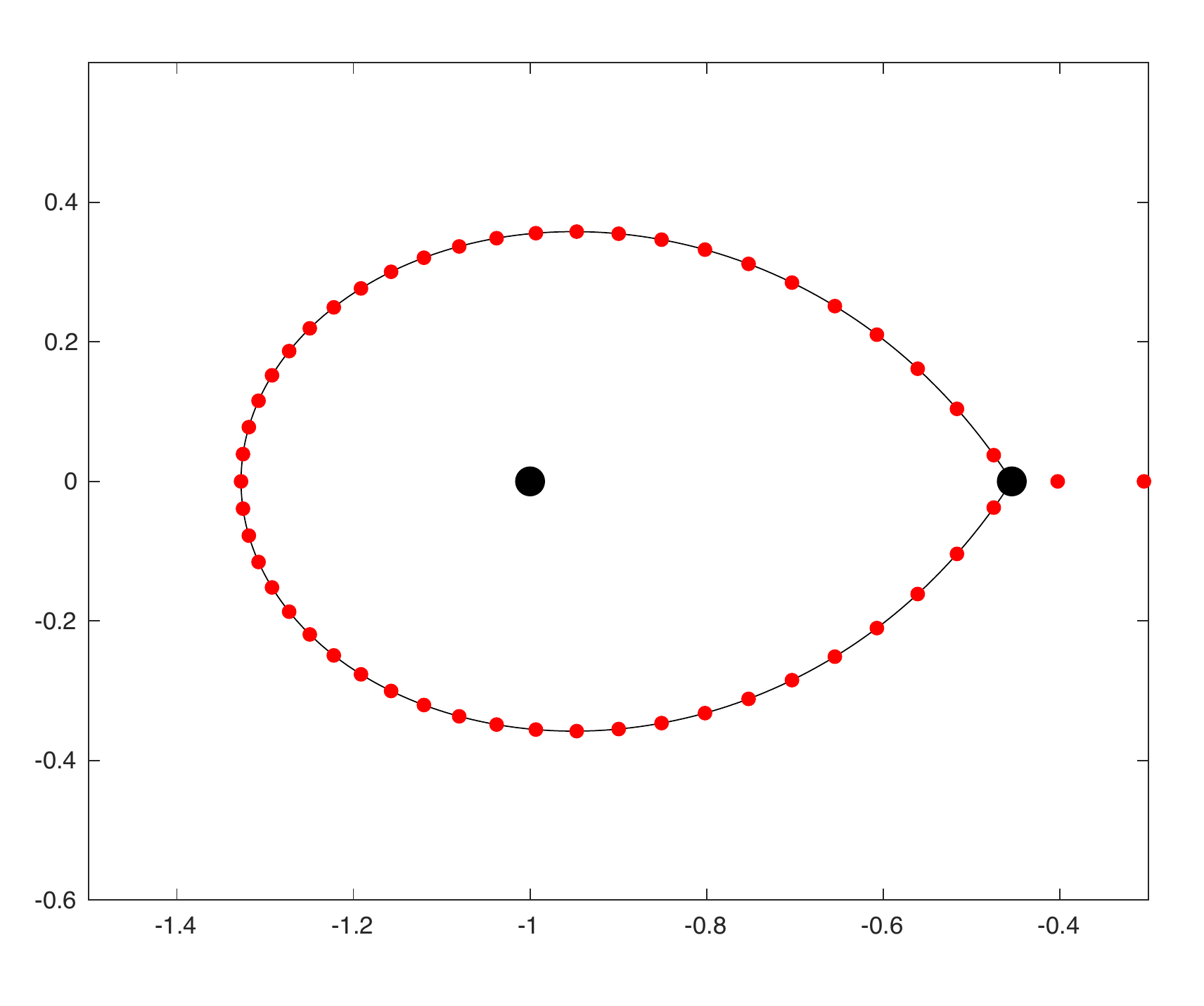}
\caption{Complex zeros of the Faber polynomials $F_{n}(z)$, $n=70$, $R=2.1$, accumulating on the loop (in black). The left dot is the real point $-1$, the right dot where the loop ends is the real point $1/2b<0$.}
\label{loop}
\end{figure}

Define
$$c_{\pm}:= (\phi_b)_{\pm}\left(\frac{1}{2b}\right) = 1-\frac{1}{2b^2} \mp 
\frac{i}{b}\sqrt {1-\frac{1}{4b^2}}\in \T$$
(note $1-1/4b^2>0$ since $b < -1/2$). The image $\phi_b(\LL_b^{+})$ is clearly a subarc of $\T$ from $c_+$ to $c_-=\bar c_+$, traversed counterclockwise (notice that $\phi_{b}(z)$ never takes the value $1$), symmetric about the real axis. We denote this arc by $(c_+,c_-)$. 
We also define the real segment 
$$I_b:=[1/2b,1].$$

\begin{theorem} \label{otherreal} For $R>3/2$, all zeros of $\{F_n\}$ accumulate on $\LL_b^{+}
 \cup I_b$. Moreover
\begin{equation}\label{lim-dist}
\lim_{n\to \infty} \mu_n = (U^{-1})_*(\mu_{[-1,1]})|_{I_b} + (\phi_b^{-1})_*(\eta|_{(c_+,c_-)} )\quad \hbox{weak-*}.
\end{equation}
\end{theorem}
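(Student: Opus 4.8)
The plan is to reduce the zero equation to two decoupled asymptotic problems—a Chebyshev oscillation problem on the real segment $I_b$ and a ``roots of unity'' problem on the loop $\mathcal L_b^+$—by exploiting the factorization hidden in (\ref{use?}). Writing $g_\pm(z)=z-b\pm\sqrt{z^2-1}$, one checks that $g_+g_-=V(z)$ and $\sqrt{W^2-V}=\sqrt{z^2-1}$, so that $F_n(z)=0$ is equivalent to $g_+^n+g_-^n=(-b)^n$, i.e. to $\phi_b^n+\tilde\phi_b^n=1$, where $\phi_b=-g_+/b$, $\tilde\phi_b=-g_-/b$ and $\phi_b\tilde\phi_b=V/b^2$. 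The circle $\mathcal C_b=\{|V|=|b|^2\}$ is exactly the locus $|\phi_b\tilde\phi_b|=1$, and Proposition \ref{rhsmall} says that the right-hand side of (\ref{eq-Faber}) is dominant inside $\mathcal C_b$ and dominated outside. I would organize the whole argument around this dichotomy.

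First I would localize the zeros. Strictly outside $\mathcal C_b$ the factor $(-b/V^{1/2})^n\to0$, while $|T_n(U(z))|$ grows geometrically unless $U(z)\in[-1,1]$, i.e. unless $z\in\mathcal A$; since the part of $\mathcal A$ lying outside $\mathcal C_b$ is $I_b\setminus\{i_b\}$ (for $b\le-1$ the extra component $\tilde{\mathcal C}_b$ of $\mathcal A$ lies inside $\mathcal C_b$, together with the whole point $b$), no zeros accumulate outside $\mathcal C_b$ except on $I_b$. Strictly inside $\mathcal C_b$ one has $|\phi_b\tilde\phi_b|<1$, so the equation $\phi_b^n+\tilde\phi_b^n=1$ forces $\max(|\phi_b|,|\tilde\phi_b|)\to1$; this maximum is branch-independent and, inside $\mathcal C_b$, equals $1$ precisely on $\mathcal L_b^+$ (the portion of $\mathcal L_b$ inside $\mathcal C_b$). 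This establishes that the accumulation set is $\mathcal L_b^+\cup I_b$.

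Next, the density on $I_b$. On $I_b=[1/2b,1]$ the function $U$ is real, increasing and one-to-one onto $[U(1/2b),1]$, and $|{-b}/V^{1/2}|\le1$ by Proposition \ref{rhsmall}; hence the oscillation argument of Theorem \ref{realullman} applies verbatim, producing one real zero in each oscillation interval of $T_n\circ U$ and yielding $(U^{-1})_*(\mu_{[-1,1]})|_{I_b}$ in the weak-* limit. For the density on $\mathcal L_b^+$ I would work locally, away from the single branch-cut contact point $i_b$: there $\phi_b,\tilde\phi_b$ are single-valued holomorphic functions with $|\tilde\phi_b|\le1-\delta<1=|\phi_b|$, so $\tilde\phi_b^n\to0$ uniformly and the zeros of $F_n$ converge to the solutions of $\phi_b(z)^n=1$. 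A Rouch\'e comparison of $\phi_b^n-1+\tilde\phi_b^n$ with $\phi_b^n-1$ on a thin tube about each sub-arc of $\mathcal L_b^+$ matches zeros of $F_n$ one-for-one with the $n$-th roots of unity lying on $\phi_b(\mathcal L_b^+)=(c_+,c_-)$. Since those roots equidistribute according to $\eta$, pulling back by the conformal map $\phi_b$ gives $(\phi_b^{-1})_*(\eta|_{(c_+,c_-)})$, which is the second term.

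The hard part will be this last step, for two reasons. The loop $\mathcal L_b^+$ encircles the branch cut $[-1,1/2b]$, so $\phi_b$ and $\tilde\phi_b$ are globally multivalued and one cannot run a single global argument-principle count; instead one must patch together the local Rouch\'e estimates on tubes that avoid the cut (which they meet only at $i_b$), and then separately bound the contribution of a shrinking neighborhood of the corner $i_b$, where $\mathcal A$, $\mathcal C_b$ and $\mathcal L_b^+$ all meet and $\tilde\phi_b^n$ is no longer uniformly small—showing that this neighborhood carries only $o(n)$ zeros and is therefore weak-* negligible. As a global consistency check I would verify that the two masses sum to $1$: writing $\cos\beta=U(1/2b)=1-1/(2b^2)$ with $\beta\in(0,\pi)$, the $I_b$-part has mass $\tfrac1\pi\arccos U(1/2b)=\beta/\pi$, while $c_\pm=e^{\pm i\beta}$ makes the arc $(c_+,c_-)$ subtend angle $2\pi-2\beta$, so the $\mathcal L_b^+$-part has mass $1-\beta/\pi$; the total is $1$, confirming that no zeros are lost in the decomposition.
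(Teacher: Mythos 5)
Your proposal is correct, and its skeleton coincides with the paper's proof: localization of the zeros via the $\mathcal{C}_b$ dichotomy of Proposition \ref{rhsmall}, the oscillation argument of Theorem \ref{realullman} on $I_b$, a Rouch\'e matching of the remaining zeros with $\phi_b$-preimages of $n$-th roots of unity on the arc $(c_+,c_-)$, exclusion of a neighborhood of the corner $i_b$, and a final mass count $\beta/\pi+(1-\beta/\pi)=1$ showing no zeros are lost. Where you genuinely differ --- and in fact improve on the paper --- is the engine driving the loop part. The paper never writes the zero equation in your exact form $\phi_b(z)^n+\widetilde\phi_b(z)^n=1$ (which is just (\ref{use?}) divided by $(-b)^n$, using your factorization $g_+g_-=V$); instead it invokes the Chebyshev asymptotics $T_n(u)=\tfrac12\bigl(u+\sqrt{u^2-1}\bigr)^n\bigl(1+\mathcal{O}(\rho^{-2n})\bigr)$, valid only for $U(z)$ outside the ellipse $\mathcal{E}_\rho$. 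Since $U(i_b)\in[-1,1]$, uniformity of that $\mathcal{O}$-term forces the paper to take $\rho=1+1/\sqrt{n}$ and to excise a disk of radius $c/\sqrt{n}$ about $i_b$, so that the Rouch\'e inequality (\ref{Rouch}) compares a term of size $1/n$ against $\mathcal{O}(e^{-2\sqrt{n}})$. In your formulation the perturbation $\widetilde\phi_b^{\,n}$ is exact rather than an error term: away from a \emph{fixed} $\delta$-neighborhood of $i_b$ it is bounded by $(1-\epsilon(\delta))^n$, so the comparison $|\widetilde\phi_b^{\,n}|<|\phi_b^n-1|$ on circles of radius $n^{-2}$ in the $\phi_b$-image goes through with no asymptotic expansion and no $n$-dependent excision, letting $\delta\to0$ only after $n\to\infty$. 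Your localization inside $\mathcal{C}_b$ via the branch-independent quantity $\max(|\phi_b|,|\widetilde\phi_b|)$ is likewise a tidy replacement for the paper's separate arguments ruling out zeros on $[-1,1/2b]$ and on $\widetilde{\mathcal{C}}_b$.

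One caveat of emphasis: what you call a ``global consistency check'' is in fact an indispensable step, not a check. The Rouch\'e disks and the oscillation argument only produce \emph{lower} bounds on the number of zeros near $\mathcal{L}_b^+$ and on $I_b$; nothing bounds directly the zeros hiding near $i_b$ or between the Rouch\'e disks. It is precisely the computation that the located zeros number $n(\beta/\pi)+n(1-\beta/\pi)+o(n)=n+o(n)$, against the total of $n$, that shows the unaccounted zeros are $o(n)$ and hence weak-* negligible; the paper states this explicitly (``we show that the number of roots of $F_n$ already found is asymptotically equivalent to $n$''). With that step promoted from a check to an essential part of the argument, your proof is complete.
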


\begin{proof} In this case, using $i_b= 1/2b$ and Proposition \ref{rhsmall}, for points $z\in [-1,1]$ we have 
$$\left|\frac{b}{V(z)^{1/2}}\right|\leq 1 \ \hbox{if and only if} \ z\in I_b=[1/2b,1].$$
Since $b < -1/2$ we have $-1< 1/2b$ so $I_b$ is a proper subinterval of $[-1,1]$. Recall that $U:[1/b,1] \to [-\sqrt{b^2-1}/b,1]$ is a one-to-one, onto, increasing map; hence $U$ is monotone on $I_b=[1/2b,1]\subset [1/b,1]$. As in the proof of Theorem \ref{realullman}, 
$$2T_n(U(z))=\left(\frac{-b}{V(z)^{1/2}}\right)^n$$
has real solutions $z$ for $z\in I_b$. Call these $z_1^{(n)},\ldots,z_{j(n)}^{(n)}\in I_b$ where $j(n)\leq n$ and define
$$\tilde \mu_n :=\frac{1}{n} \sum_{j=1}^{j(n)} \delta_{z_j^{(n)}}.$$
 Then, as in the previous result,   
$$\lim_{n\to \infty} \tilde \mu_n = (U^{-1})_*(\mu_{[-1,1]})|_{I_b} \quad\hbox{weak-*;}$$
i.e., these real roots distribute asymptotically like $(U^{-1})_*(\mu_{[-1,1]})|_{I_b}$. Note that the total mass of $(U^{-1})_*(\mu_{[-1,1]})|_{I_b}$ is
$$\mu_{[-1,1]}([U(1/2b),U(1)]) =\mu_{[-1,1]}([1-1/2b^2,1]) $$
$$=\frac{1}{\pi}\int_{1-1/2b^2}^1\frac{dx}{\sqrt{1-x^2}}= \frac{1}{\pi}\left(\frac{\pi}{2}-\sin^{-1}(1-1/2b^2)\right).$$

Next, we show that 
$$2T_n(U(z))=\left(\frac{-b}{V(z)^{1/2}}\right)^n$$
has no real solutions $z$ with $z\in [-1,1/2b]$ for $n$ sufficiently large. For such $z$, by Proposition \ref{rhsmall}, $|{b}/{V(z)^{1/2}}|> 1$. On the other hand, $U(z)$ takes only real values between $-\sqrt{b^2-1}/b$ and 1 for $z\in [-1,1/2b]$ so that $|2T_n(U(z))|\leq 2$. Thus for $n$ sufficiently large, $F_n$ has no zeros in $ [-1,1/2b]$.

Thus all other roots of $F_n$ lie outside of $[-1,1]$. We now show that there are no more roots on $\mathcal A=\{z\in \C: U(z)\in [-1,1]\}$ (which recall equals  $[-1,1]\cup \tilde {\mathcal C}_b$ if $R>2$ where $\tilde {\mathcal C}_b$ was defined in (\ref{ctilde})). Suppose $z\in \mathcal A$. We distinguish two cases as described following Lemma \ref{arc}. If $R\leq 2$ (i.e., $b\geq -1$), the two roots of $U^2(z)=x\in [0,1]$ lie in $[-1,1]$ and we are done by the previous paragraph. If $R>2$ (i.e., $b< -1$) and $U(z)\in [0,-\sqrt{b^2-1}/b)$ then $z \in \tilde {\mathcal C}_b\setminus [-1,1]$. Now $\tilde {\mathcal C}_b$ and ${\mathcal C}_b$ are concentric with ${\mathcal C}_b$ having a larger radius; thus by Proposition \ref{rhsmall}, $F_n$ has no roots on $\tilde {\mathcal C}_b$ and hence none on $\mathcal A$, other than those on $I_{b}$.

We conclude that all remaining roots of $F_n$ occur at points $z$ where $u:=U(z)\not\in [-1,1]$. We utilize the fact that the Chebyshev polynomials $T_n$ satisfy the asymptotic estimate 
$$T_n(u)= \frac12(u+\sqrt{u^2-1})^n\left(1+\OO\left(\frac{1}{\rho^{2n}}\right)\right),
$$
for $u$ outside of the ellipse $\EE_{\rho}$ given by $u=({w+w^{-1}})/{2}$ with $|w|=\rho>1$.
This follows from the definition
$$T_n(u)=\frac{1}{2}\left([u+\sqrt{u^2-1}]^n+ [u-\sqrt{u^2-1}]^n\right)=\frac{w^n+w^{-n}}{2}$$
where $u=({w+w^{-1}})/{2}$ and $u\not\in [-1,1]$ corresponds to $|w|>1$. Thus for $n$ large, roots $z$ of $F_n$ with 
$u=U(z)$ outside of $\EE_{\rho}$ satisfy
$$2T_n \left(\frac{W(z)}{V(z)^{1/2}}\right)= 
\left[ \frac{W(z)}{V(z)^{1/2}}+ \sqrt{\left(\frac{W(z)}{V(z)^{1/2}}\right)^2-1}\right]^n
\left(1+\OO\left(\frac{1}{\rho^{2n}}\right)\right)= \left(\frac{-b}{V(z)^{1/2}}\right)^n.
$$
We first consider the equation
\begin{equation}\label{approx-eq}
\left[ \frac{W(z)}{V(z)^{1/2}}+ \sqrt{\left(\frac{W(z)}{V(z)^{1/2}}\right)^2-1}\right]^n= \left(\frac{-b}{V(z)^{1/2}}\right)^n;
\end{equation}
i.e.,
$$\left(W(z)+\sqrt{(W(z))^2-V(z)}\right)^n = (-b)^n.$$
Recalling that $W(z)=z-b$ and $\phi_b(z)= ({b-z-\sqrt{z^2-1}})/{b}$, this gives the equation 
$(\phi_b(z))^n=1$. The solutions of this last equation are clearly the preimages under $\phi_{b}$ of the $n$-th roots of unity that lie on the arc $(c_+,c_-)$. 
Thus we see that, first of all, the set of accumulation points of the roots of (\ref{approx-eq}) is the entire curve $\LL_b^{+}$ from (\ref{lb}) whose image $\phi_b(\LL_b^{+})$ is the subarc $(c_+,c_-)$ of $\T$; moreover, the limit distribution of these roots is the push-forward under $\phi_b^{-1}$ of the uniform measure $\eta$ on $\T$ restricted to the arc $(c_+,c_-)$. 

Now, by the same computation as above, the roots of $F_{n}(z)$ with $U(z)$ outside of $\EE_{\rho}$ satisfy 
\begin{equation}\label{exact-eq}
(\phi_b(z))^n\left(1+\OO\left(\frac{1}{\rho^{2n}}\right)\right)=1.
\end{equation}
Hence, choosing $\rho>1$ as close as we wish to $1$, we see that all the roots of $F_{n}(z)$ accumulate on the loop $\LL_{b}^{+}$ as $n$ gets large. Making use of Rouch\'e's theorem, we next show that they have the same asymptotic distribution on $\LL_{b}^{+}$ as the roots of (\ref{approx-eq}). In order to control the magnitude of the $\OO$-term in (\ref{exact-eq}), we need to exclude from the subsequent analysis a neighborhood of $i_{b}=1/2b$, the unique point of $\LL_{b}^{+}$ whose image under $U$ belongs to $[-1,1]$. This neighborhood has to be small enough so that excluding from the analysis the zeros of $F_{n}$ belonging to that neighborhood does not modify the limit distribution, but it must also be large enough so that the $\OO$-term decreases sufficiently fast with $n$. We choose for this neighborhood a disk $ \DD_{b}$ centered at $i_{b}$ of radius $c/\sqrt{n}$ with $c>0$ chosen so that the image of $\LL_{b}^{+}\setminus \DD_{b}$ under $U$ lies outside of $\EE_{\rho}$ with $\rho=1+1/\sqrt{n}$ (an explicit value for $c$ could be given in terms of the derivative $U'(i_{b})\neq0$). Hence, outside of $ \DD_{b}$, the roots of $F_{n}$ satisfy (\ref{exact-eq}) with $\rho=1+1/\sqrt{n}$ (and the $\OO$-term is uniform with respect to $z$). 

Consider an $n$-th root of unity $a_{k}:=e^{2ik\pi/n}$ lying in $(c_{+},c_{-})\setminus\phi_{b}( \DD_{b})$ and a small circle $\CC_{k}$ of radius $n^{-2}$, centered at $a_{k}$, 
so that $\CC_{k}$ does not contain or encircle any other $n$-th roots of unity. 
To show that the contour $\Gamma_{k}:=\phi_{b}^{-1}(\CC_{k})$ surrounds exactly one root of $F_{n}$ for $n$ large enough, it is sufficent, by Rouch\'e's theorem and in view of (\ref{exact-eq}), to show that
$$
|(\phi_b(z))^n\OO(\rho^{-2n})|<|(\phi_b(z))^n-1|,\qquad z\in\Gamma_{k},
$$
or equivalently
\begin{equation}\label{Rouch}
\OO\left(\frac{1}{\rho^{2n}}\right)<\left|1-\left(a_{k}+\frac{e^{i\theta}}{n^{2}}\right)^{-n}\right|=
\left|1-\left(1+\frac{e^{i(\theta-2k\pi/n)}}{n^{2}}\right)^{-n}\right|
,\qquad\theta\in[0,2\pi].
\end{equation}
Since $\rho=1+1/\sqrt{n}$, we have that $\OO(\rho^{-2n})=\OO(e^{-2\sqrt{n}})$.
Moreover,
$$
\left(1+\frac{e^{i(\theta-2k\pi/n)}}{n^{2}}\right)^{-n}=1-\frac{e^{i(\theta-2k\pi/n)}}{{n}}+\OO\left(\frac{1}{n^{2}}\right).
$$
Consequently, the strict inequality (\ref{Rouch}) is satisfied for $n$ large enough, independent of $k$, showing that the contour $\Gamma_{k}$ surrounds exactly one root of $F_{n}$.

Finally, it remains to check that the non-real roots of $F_{n}$ excluded from the above argument do not modify the limit distribution given in (\ref{lim-dist}). Equivalently, we show that the number of roots of $F_{n}$ already found is asymptotically equivalent to $n$. 
First, notice that the total mass of $(\phi_b^{-1})_*(\eta|_{(c_+,c_-)} )$ is 
$$\eta \left((c_+,c_-)\right)= 2\cdot \frac{1}{2\pi}  \left(\pi - \cos^{-1} (1-1/2b^2)\right)=\frac{1}{\pi}  \left(\pi - \cos^{-1} (1-1/2b^2)\right),$$ 
while the total mass of $(U^{-1})_*(\mu_{[-1,1]})|_{I_b}$ is
$$\frac{1}{\pi}\left(\frac{\pi}{2} -\sin^{-1}(1-1/2b^2)\right).
$$
Next, the number of $n$-th roots of unity that are contained in the image of $\DD_{b}$ under $\phi_{b}$ is of order $\OO(\sqrt{n})$.
Hence an estimate for the number of roots of $F_{n}$ already found is
\begin{multline*}
\frac{n}{\pi}\left(\frac{3\pi}{2} -[ \sin^{-1}(1-1/2b^2)+\cos^{-1} (1-1/2b^2)]\right)-\OO(\sqrt{n})+o(n)
\\
=\frac{n}{\pi}\left(\frac{3\pi}{2} -\frac{\pi}{2}\right)+o(n)=n+o(n),
\end{multline*}
which is indeed asymptotically equivalent to $n$.
\end{proof}

\section{Zero distribution: the complex case}

In this section we take $\theta\not = 0$ (with $R\cos \theta >1$). 
To describe the limit distribution of the zeros of the Faber polynomials, we essentially repeat the analysis performed in the real case ($\theta=0$), with some modifications.

We recall that the arc $\AA$ and the circle $\CC_{b}$ were defined in (\ref{arcA}) and (\ref{def-Cb}). When it exists, the intersection point $i_{b}$ of $\CC_{b}$ and $\AA$ has been determined in Lemma \ref{inter}. Figure \ref{zeros-nonsym} shows how the zeros of the Faber polynomials distribute, depending on whether or not $\CC_{b}$ and $\AA$ intersect. Figure \ref{arc-nonsym} shows the arc $\AA$ where the zeros accumulate ($\CC_{b}$ and $\AA$ do not intersect in that case). Figure \ref{loop-comp-comp} shows an example when $\CC_{b}$ and $\AA$ intersect.

We consider two cases.

\begin{theorem}\label{complexullman} For $1 <R \cos \theta \leq {3}/{2}$, all zeros of $F_n(z)$ approach $\mathcal A$ as $n\to \infty$ and 
$$\lim_{n\to \infty} \mu_n = (U^{-1})_*(\mu_{[-1,1]}) \quad \hbox{weak-*}.$$

\end{theorem}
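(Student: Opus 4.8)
The plan is to reduce everything to the factored form of the zero equation and then mirror, with the necessary complex-analytic substitutes, the two ingredients of the real case: a localization trapping all zeros near $\AA$, and a transport argument identifying their limiting distribution. Since $\theta\neq0$ we have $b\notin(-\infty,-1]$, so by the discussion following Lemma~\ref{arc} the set $\AA$ is a simple arc and $U:\AA\to[-1,1]$ is an increasing homeomorphism. Writing $\phi_b^-(z):=(b-z+\sqrt{z^2-1})/b$ for the companion branch of $\phi_b$ (so that $\LL_b^-=\{|\phi_b^-|=1\}$), equation (\ref{use?}) shows that $F_n(z)=0$ if and only if
\[
\phi_b(z)^n+\phi_b^-(z)^n=1 .
\]
I will use throughout the identities $\phi_b\phi_b^-=V/b^2$ and $V=-2b(z-c)$, so that $|V|=2|b|\,|z-c|$ and $\CC_b=\{|V|=|b|^2\}$. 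By Lemma~\ref{inter}, for $1<R\cos\theta\le3/2$ the arc $\AA$ meets $\CC_b$ at most at $-1$ and otherwise lies in the exterior $\{|V|>|b|^2\}$; hence $|\phi_b|=|V|^{1/2}/|b|>1$ on $\AA$, with equality possible only at $-1$ when $R\cos\theta=3/2$.

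The first step is localization: I will show $|\phi_b|>1$ at every point off $\AA$ (again with the single exception of the contact point $-1$), and deduce that no zeros of $F_n$ accumulate away from $\AA$. The locus $\{|\phi_b|=|\phi_b^-|\}$ equals $\AA$, since $|\phi_b|=|\phi_b^-|$ forces $W/\sqrt{z^2-1}=U/\sqrt{U^2-1}$ to be purely imaginary, i.e. $U\in(-1,1)$; as the complement of the arc $\AA$ is connected and $|\phi_b|\to\infty$, $|\phi_b^-|\to1$ at $\infty$, we get $|\phi_b|>|\phi_b^-|$ everywhere off $\AA$. Inside $\CC_b$ the branch $\phi_b$ is holomorphic and nonvanishing (the only zero of $\phi_b\phi_b^-=V/b^2$ is at $c$, where $\phi_b(c)=1-1/b^2\neq0$, so the vanishing is carried by $\phi_b^-$); since $|\phi_b|\ge1$ on $\CC_b$, the minimum principle gives $|\phi_b|>1$ in the open interior. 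Outside $\CC_b$, $|\phi_b||\phi_b^-|=|V|/|b|^2>1$ together with $|\phi_b|>|\phi_b^-|$ again gives $|\phi_b|>1$. Consequently, on any compact set $K$ disjoint from a fixed neighborhood of $\AA$ one has $|\phi_b|\ge1+\delta$ and $|\phi_b^-|/|\phi_b|\le1-\delta'$, whence
\[
|\phi_b^n+(\phi_b^-)^n|\ge|\phi_b|^n\bigl(1-(|\phi_b^-|/|\phi_b|)^n\bigr)\longrightarrow\infty ,
\]
so $F_n$ has no zeros on $K$ for $n$ large. This confines all zeros to an arbitrarily small neighborhood of $\AA$.

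The second step transports the problem through $U$. Since $U'\neq0$ along $\AA$ (its only critical point $1/b$ lies off $\AA$ by Lemma~\ref{arc}, and one checks $U'(\pm1)\neq0$), $U$ is a conformal homeomorphism of a neighborhood of $\AA$ onto a neighborhood of $[-1,1]$, and in the variable $u=U(z)$ the equation (\ref{eq-Faber}) reads $2T_n(u)=h_n(u)$ with $h_n(u)=(-b/V^{1/2})^n$, so that $|h_n|=|\phi_b|^{-n}\to0$ uniformly on a neighborhood of $[-1,1]$ when $R\cos\theta<3/2$ (respectively, on such a neighborhood with a shrinking disk about $-1$ removed when $R\cos\theta=3/2$). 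Thus $2T_n(u)-h_n(u)$ is a vanishing perturbation of $2T_n$, and a Rouch\'e argument on small circles about each Chebyshev node—exactly as in the proof of Theorem~\ref{otherreal} via (\ref{exact-eq})—produces one zero of $F_n$ near each zero of $T_n$. Counting these gives $n+o(n)$ zeros, matching $\deg F_n=n$, and since the zero-counting measures of $T_n$ converge weak-$*$ to $\mu_{[-1,1]}$, so do the images $U(z_j^{(n)})$; pushing forward by $U^{-1}$ yields $\mu_n\to(U^{-1})_*(\mu_{[-1,1]})$, with density as in (\ref{push-expl}).

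The step I expect to be the main obstacle is the distribution count in the $u$-plane. In the real subcase (Theorem~\ref{realullman}) the right-hand side $h_n$ is real and the intermediate value theorem, together with the monotonicity of $U$, delivered the zeros directly; here $h_n$ is genuinely complex, so I must instead run the Rouch\'e/Hurwitz comparison and control the perturbation $h_n$ uniformly. In the boundary case $R\cos\theta=3/2$ this forces me to excise a neighborhood of the contact point $-1$, where $|\phi_b(-1)|=1$ and hence $|h_n|\not\to0$; the remaining verification is that the excised disk captures only $o(n)$ zeros and so does not alter the weak-$*$ limit, by the same mass estimate used at the end of the proof of Theorem~\ref{otherreal}.
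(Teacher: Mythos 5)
Your argument is correct, and although it shares the paper's overall architecture---view the zero equation as the perturbation $2T_n(U(z))=(-b/V(z)^{1/2})^n$ of the shifted Faber polynomial and transport the Chebyshev zero distribution through $U^{-1}$---both of your key steps run on genuinely different mechanisms than the paper's. For localization, the paper never establishes your global inequality $|\phi_b|>1$ off $\AA$; instead it applies Rouch\'e once, on a closed contour $\Gamma$ surrounding $\AA$ and avoiding $\CC_b$, using only Proposition \ref{rhsmall} (the bound $|(-b/V^{1/2})^n|\le 1$ on and outside $\CC_b$) against the geometric growth of $|2T_n(U(z))|$ on $\Gamma$, and then invokes a counting trick: the comparison function has all $n$ of its zeros inside $\Gamma$, hence so does $F_n$, which has exactly $n$ zeros---so the region outside $\Gamma$ (inside $\CC_b$, near infinity) never needs to be examined at all. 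Your route replaces this with a self-contained lower bound built from the factorization $\phi_b\phi_b^-=V/b^2$, the identification of $\{|\phi_b|=|\phi_b^-|\}$ with $\AA$, and the minimum-modulus principle inside $\CC_b$; this is more work, but it explains directly that zeros can only occur where the two branches nearly cancel, and it gives information the paper's proof does not (note only that to confine \emph{all} zeros you must also rule out a neighborhood of infinity, which your bounds do automatically since $|\phi_b|\to\infty$ and $|\phi_b^-|\to 1$ there). For the distribution, the paper puts small contours through the consecutive extrema of $T_n\circ U$ along $\AA$, where $|2T_n\circ U|=2$ beats the uniform bound $1$; you instead exploit the strict separation $\AA\cap\CC_b=\emptyset$ (valid for $R\cos\theta<3/2$) to get geometric decay of the right-hand side and run Rouch\'e on circles about the Chebyshev nodes, which yields exponentially precise localization of each individual zero but degenerates in the borderline case $R\cos\theta=3/2$, forcing the excision near $-1$ and the $o(n)$ mass estimate that you correctly supply (the paper's extrema contours need only the bound $1<2$ and so absorb the borderline case with no excision, its only concession being to place $-1$ on $\Gamma$). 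Two harmless imprecisions: the identity $|h_n|=|\phi_b|^{-n}$ holds only on $\AA$ itself (off $\AA$ one has $|h_n|=(|\phi_b|\,|\phi_b^-|)^{-n/2}$), but the uniform geometric decay you need on a neighborhood of $\AA$ follows anyway from $|V|\ge|b|^2(1+\epsilon)$ there; and your compact-set statement should be phrased on the closed complement of a neighborhood of $\AA$, as just noted.
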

\begin{figure}[!tb]
\centering
\includegraphics[width=7.5cm]{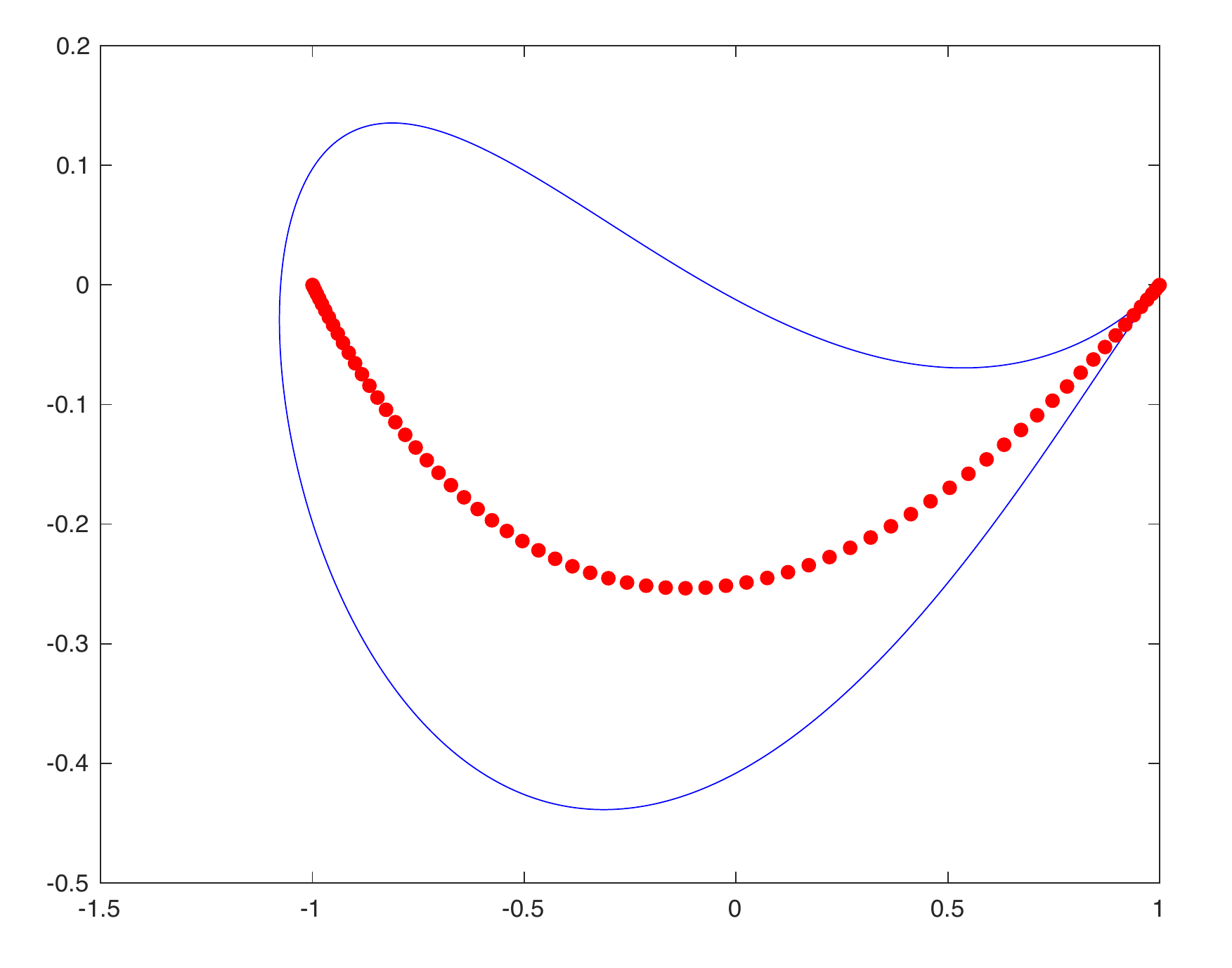}
\includegraphics[width=7.5cm]{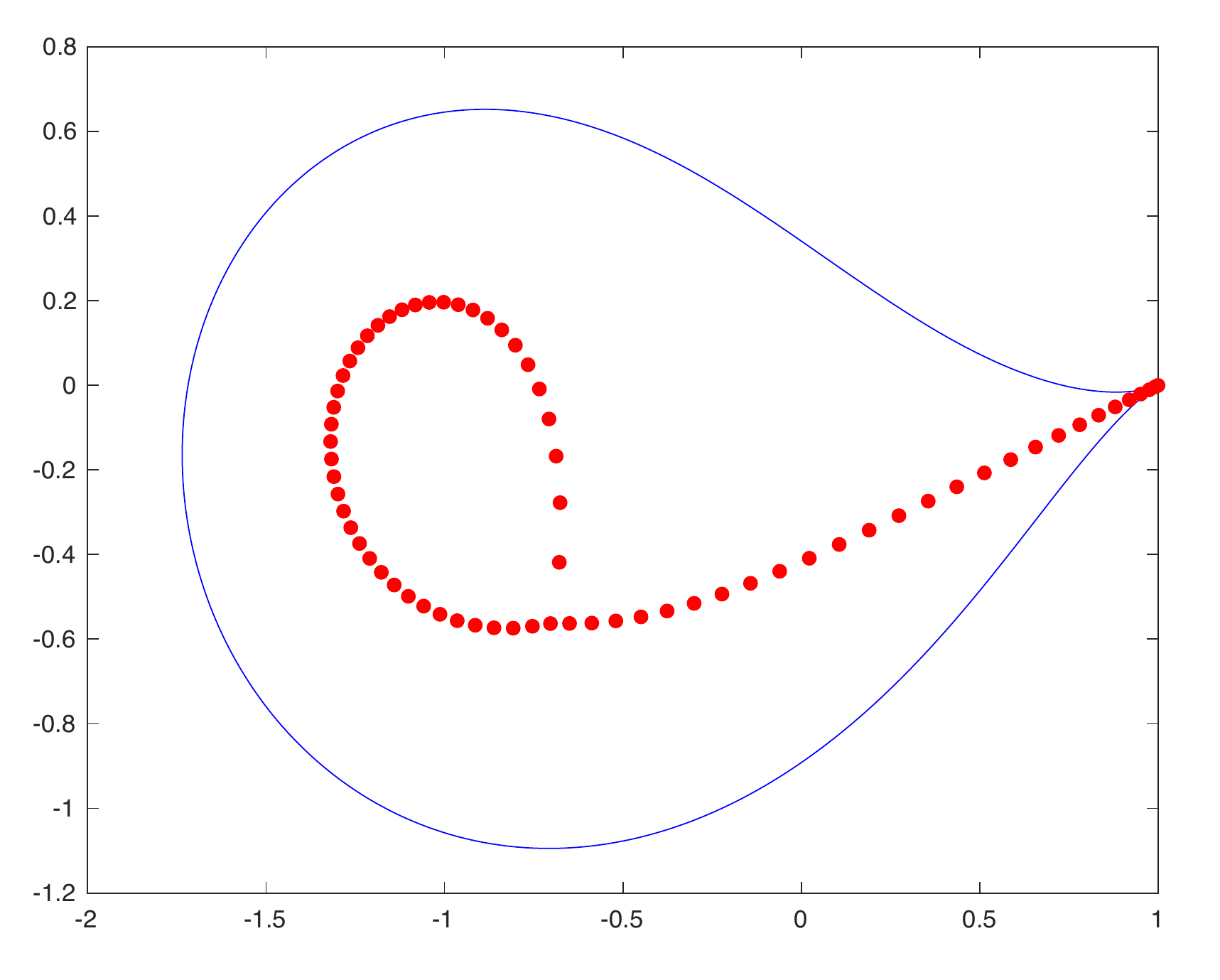}
\caption{Zero distribution of the Faber polynomials $F_{n}(z)$, $n=70$, in the complex case. The parameters for the Joukowski airfoil are $\theta=0.2$, and $R=1.26$ (left), $R=2.1$ (right).}
\label{zeros-nonsym}
\end{figure}
\begin{proof} The difference with Theorem \ref{realullman} is that here the zeros of $F_n$ need not lie on $\mathcal A$ but we first show that they do accumulate there. In this case, by Lemma \ref{inter}, $\mathcal A$ is disjoint from $\mathcal C_b$. Thus we can take a simple, closed contour $\Gamma$ which surrounds $\mathcal A$ and is disjoint from $\mathcal C_b$. If $R \cos \theta = {3}/{2}$ we take $\Gamma$ to contain the point $-1$. We claim that for $z\in \Gamma$, for $n$ sufficiently large, we have the strict inequality
\begin{equation}
\label{strict} 
\left|\left(2T_n\left(\frac{W(z)}{V(z)^{1/2}}\right) -\left(\frac{-b}{V(z)^{1/2}}\right)^n\right)   -2T_n\left(\frac{W(z)}{V(z)^{1/2}}\right)\right|<\left|2T_n\left(\frac{W(z)}{V(z)^{1/2}}\right)\right|.
\end{equation}
This is simply because the left side of (\ref{strict}) is $|({-b}/{V(z)^{1/2}})^n|$ which is at most 1 by Proposition \ref{rhsmall}; while the right side $|2T_n\left({W(z)}/{V(z)^{1/2}}\right)|$ goes to infinity (at a geometric rate) since $z\in \Gamma$ implies $U(z)={W(z)}/{V(z)^{1/2}}\not\in [-1,1]$. The strict inequality continues to hold at $z=-1$ if $R \cos \theta = {3}/{2}$ since the right side is $2=2|T_n(-1)|$ 
while the left side is $1$ since $-1\in \mathcal C_b$ and thus $|b/V(-1)^{1/2}|=1$.
Now both 
$$2T_n\left({W(z)}/{V(z)^{1/2}}\right) -\left({-b}/{V(z)^{1/2}}\right)^n \ \hbox{and} \  2T_n\left({W(z)}/{V(z)^{1/2}}\right)$$ 
are holomorphic functions inside and on $\Gamma$; thus by Rouch\'e's theorem, each has the same number of zeros -- namely $n$ -- inside $\Gamma$. This argument holds for any such $\Gamma$; taking $\Gamma$ closer and closer to $\mathcal A$ shows that all zeros of $F_n(z)$ approach $\mathcal A$ as $n\to \infty$. Indeed, by
choosing a small contour $\gamma$ locally around each zero $\alpha_{k,n}$ of $\zeta \to T_n\left({W(\zeta)}/{V(\zeta)^{1/2}}\right)$ which crosses $\mathcal A$ through the two consecutive extrema of this function around $\alpha_{k,n}$ -- so that (\ref{strict}) holds for all $z\in \gamma$ -- we can apply Rouch\'e's theorem inside $\gamma$. Thus we obtain that the zeros of $\{F_n\}$ asymptotically distribute
like the measure $(U^{-1})_*(\mu_{[-1,1]})$.
\end{proof}

\begin{theorem} For $R \cos \theta > {3}/{2}$, the zeros of $F_n$ accumulate on $\mathcal A_b\cup \LL_b^{+}$ where $\mathcal A_b$ is the portion of the arc $\mathcal A$ from the point $i_b$ to the point 1 and $ \LL_b^{+}$ is the loop in (\ref{lb}) containing the point $i_b$ where it has a corner. Moreover
$$\lim_{n\to \infty} \mu_n = (U^{-1})_*(\mu_{[-1,1]})|_{\mathcal A_b} + (\phi_b^{-1})_*(\eta|_{(c_+,c_-)} )\quad \hbox{weak-*}$$
where $(c_+,c_-)$ is the arc of $\T$ from $c_+$ to $c_-=c_+$ (traversed counterclockwise) where 
$$c_{\pm}:= (\phi_b)_{\pm}(i_b) \in \T.$$
\end{theorem}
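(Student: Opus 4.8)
The plan is to mirror the proof of Theorem \ref{otherreal} (the second real case), since the complex case $R\cos\theta>3/2$ differs from it only in that the arc $\AA$ need no longer be a subinterval of $[-1,1]$ and the intersection point $i_b$ is now the general point from Lemma \ref{inter} rather than $1/2b$. First I would split the zeros of $F_n$ into those with $U(z)\in[-1,1]$ (accumulating on the arc $\AA$) and those with $U(z)\notin[-1,1]$ (accumulating on the loop $\LL_b^+$). For the first family, recall from Lemma \ref{arc} and the case analysis following it that $U$ maps $\AA$ monotonically onto $[-1,1]$; by Proposition \ref{rhsmall}, the quantity $|(-b/V(z)^{1/2})^n|$ is at most $1$ exactly when $z$ lies outside or on $\CC_b$, and since $i_b$ is the unique intersection point of $\AA$ and $\CC_b$, the subarc of $\AA$ lying outside $\CC_b$ is precisely $\AA_b$, the portion from $i_b$ to $1$. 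On $\AA_b$ the right-hand side of (\ref{eq-Faber}) has modulus $\le 1$ while $2T_n(U(z))$ oscillates between $\pm 2$, so the oscillation argument of Theorem \ref{realullman} produces zeros distributing like $(U^{-1})_*(\mu_{[-1,1]})|_{\AA_b}$. The small technical point here is that, unlike the real case, these zeros need not lie exactly on $\AA$; as in Theorem \ref{complexullman} one controls their location by a local Rouch\'e argument around each zero of $z\mapsto T_n(U(z))$, using the strict inequality (\ref{strict}) which holds since $|(-b/V(z)^{1/2})^n|\le 1$ there.

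Next I would treat the zeros with $U(z)\notin[-1,1]$, following the second half of Theorem \ref{otherreal} verbatim. Using the asymptotic estimate $T_n(u)=\tfrac12(u+\sqrt{u^2-1})^n(1+\OO(\rho^{-2n}))$ for $u$ outside the ellipse $\EE_\rho$, the zero equation reduces to $(\phi_b(z))^n(1+\OO(\rho^{-2n}))=1$, whose limiting version $(\phi_b(z))^n=1$ has solutions equal to the preimages under $\phi_b$ of the $n$-th roots of unity lying on the arc $(c_+,c_-)=\phi_b(\LL_b^+)$. These accumulate on $\LL_b^+$ and distribute like $(\phi_b^{-1})_*(\eta|_{(c_+,c_-)})$. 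The Rouch\'e comparison between the exact and approximate equations, with the small disk $\DD_b$ of radius $c/\sqrt n$ excised around $i_b$ and the choice $\rho=1+1/\sqrt n$, carries over without change, since it only used the non-vanishing of $U'(i_b)$ and $\phi_b(z)\ne 1$, both of which persist for $\theta\ne 0$.

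I would then verify that no zeros are lost on the remaining part of $\AA$ (the portion inside $\CC_b$, from $-1$ to $i_b$): there $|(-b/V(z)^{1/2})^n|>1$ by Proposition \ref{rhsmall} while $|2T_n(U(z))|\le 2$, so (\ref{eq-Faber}) has no solutions for large $n$, exactly as in the paragraph of Theorem \ref{otherreal} handling $[-1,1/2b]$. Finally I would confirm that the total mass accounts for all $n$ zeros. The mass of $(U^{-1})_*(\mu_{[-1,1]})|_{\AA_b}$ is $\mu_{[-1,1]}([U(i_b),1])$ and the mass of $(\phi_b^{-1})_*(\eta|_{(c_+,c_-)})$ is $\eta((c_+,c_-))$; since $c_\pm=(\phi_b)_\pm(i_b)$ and $U(i_b)$ are determined by the common point $i_b\in\AA\cap\CC_b$, the complementary-angle identity $\sin^{-1}t+\cos^{-1}t=\pi/2$ used at the end of Theorem \ref{otherreal} should again make the two masses sum to $1$ after accounting for the $\OO(\sqrt n)$ roots near $i_b$.

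The main obstacle I anticipate is the bookkeeping in this final mass computation. In the real case the arc $\AA_b$ and the loop angle $(c_+,c_-)$ were both pinned to the explicit point $i_b=1/2b$, giving the clean values $U(1/2b)=1-1/2b^2$ and the matching $\sin^{-1}/\cos^{-1}$ pair. For $\theta\ne 0$ the point $i_b=b+\sqrt\rho\, b e^{i\alpha}$ with $\rho,\alpha$ from (\ref{eq-rho})--(\ref{eq-phase}) is genuinely complex, so one must verify that $U(i_b)\in[-1,1]$ (which holds because $i_b\in\AA$, where $U$ is real-valued) and that the arc length $\eta((c_+,c_-))$ and the mass $\mu_{[-1,1]}([U(i_b),1])$ still add to $1$. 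The cleanest route is to argue abstractly rather than compute: the two families together are the full zero set, each Rouch\'e contour captures exactly one zero, and the excised disk $\DD_b$ contributes only $\OO(\sqrt n)$ zeros, so the masses must sum to $1$ by conservation of the total count $n$, without needing a closed-form evaluation of $U(i_b)$.
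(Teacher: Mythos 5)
Your first three paragraphs follow exactly the route the paper takes: its own proof of this theorem is a two-sentence appeal, treating the subarc $\AA_b$ (which lies outside $\CC_b$) by the Rouch\'e-type oscillation argument of Theorem \ref{complexullman}, and the remaining zeros by the loop analysis of Theorem \ref{otherreal}, which is precisely your decomposition, correctly adapted (including the observation that for $\theta\neq 0$ the arc zeros need only accumulate on $\AA_b$, so the local Rouch\'e contours of Theorem \ref{complexullman} are needed there).

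The one genuine flaw is the ``conservation of count'' argument you prefer for the final mass bookkeeping: as stated it is circular. The claim that ``the two families together are the full zero set'' --- equivalently, that the excised disk $\DD_{b}$ and every other unaccounted region contain only $o(n)$ zeros of $F_n$ --- is exactly what the mass identity is used to \emph{prove}; it is not available beforehand. Note that the $\OO(\sqrt{n})$ appearing in the proof of Theorem \ref{otherreal} counts $n$-th roots of unity inside $\phi_b(\DD_{b})$, i.e., discarded Rouch\'e contours; it is \emph{not} a bound on the number of zeros of $F_n$ inside $\DD_{b}$, where none of the Rouch\'e machinery applies and which could a priori contain a positive fraction of the $n$ zeros. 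The paper's logic runs in the opposite direction: the captured zeros number $n(m_1+m_2)+o(n)$, the explicit evaluation $m_1+m_2=1$ shows this is $n+o(n)$, and only then does one conclude that everything omitted is negligible. Without that identity your argument yields only $m_1+m_2\le 1$, leaving open that a weak-* limit has the form $\mu_1+\mu_2+t\,\delta_{i_b}$ with $t>0$, where $\mu_1,\mu_2$ denote the two limit measures in the statement.

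Fortunately, the explicit computation you hoped to avoid generalizes with almost no extra work, so the gap is easily repaired. Since $W^2(z)-V(z)=z^2-1$, writing $U(i_b)=\cos\beta$ with $\beta\in[0,\pi]$ (legitimate because $i_b\in\AA$, so $U(i_b)\in[-1,1]$), one gets
$$
c_\pm=(\phi_b)_\pm(i_b)=\frac{\sqrt{V(i_b)}}{b}\bigl(-\cos\beta\mp i\sin\beta\bigr)=\frac{\sqrt{V(i_b)}}{b}\,e^{\pm i(\pi+\beta)},
$$
and $\bigl|\sqrt{V(i_b)}/b\bigr|=1$ because $i_b\in\CC_b$. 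Hence $c_+$ and $c_-$ differ by the phase $e^{2i\beta}$, and the arc $(c_+,c_-)$ (the image of $\LL_b^{+}$) has angular width $2\pi-2\beta$, so that $\eta\bigl((c_+,c_-)\bigr)=1-\beta/\pi$. On the other side, $U$ maps $\AA_b$ monotonically onto $[\cos\beta,1]$, so $\mu_{[-1,1]}\bigl([U(i_b),1]\bigr)=\frac{1}{\pi}\cos^{-1}(\cos\beta)=\beta/\pi$. The two masses sum to $1$, recovering the $\sin^{-1}/\cos^{-1}$ identity of the real case as the special instance $\cos\beta=1-1/2b^2$, and with this identity in place your proof closes exactly as the paper's does.
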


\begin{proof} The subarc $\mathcal A_b$ of $\mathcal A$ lies outside of the circle $\mathcal C_b$ so that we can apply a similar Rouch\'e-type argument to conclude that a fixed proportion of the zeros of $F_n$ accumulate on $\mathcal A_b$ and distribute asymptotically like $ (U^{-1})_*(\mu_{[-1,1]})|_{\mathcal A_b}$. For the rest of the zeros, reasoning as in the proof of Theorem \ref{otherreal}, we see that they accumulate on $ \LL_b^{+}$ and distribute asymptotically like $(\phi_b^{-1})_*(\eta|_{(c_+,c_-)} )$.
\end{proof}

{\begin{remark} \label{ES} Recalling from \cite[Theorem III.4.7]{ST} that any weak-* subsequential limit $\mu$ of $\{\mu_n\}$ has a balayage to $\partial K$ which is the equilibrium measure $\mu_K$ of $K$, Theorems 3.1 and 4.1 show that any Joukowski airfoil $K$ with $1<R\cos \theta \leq 3/2$ admits an {\it electrostatic skeleton}; i.e., a positive measure $\mu$ with closed support $S$ in $K$ where $S$ has empty interior and connected complement such that the logarithmic potentials of $\mu$ and $\mu_K$ agree (in our case) on $\C \setminus K$. See \cite{LR} and \cite{SS} for more on this subject.

\end{remark}}
\begin{figure}[!tb]
\centering
\vspace{-1cm}
\includegraphics[width=8cm]{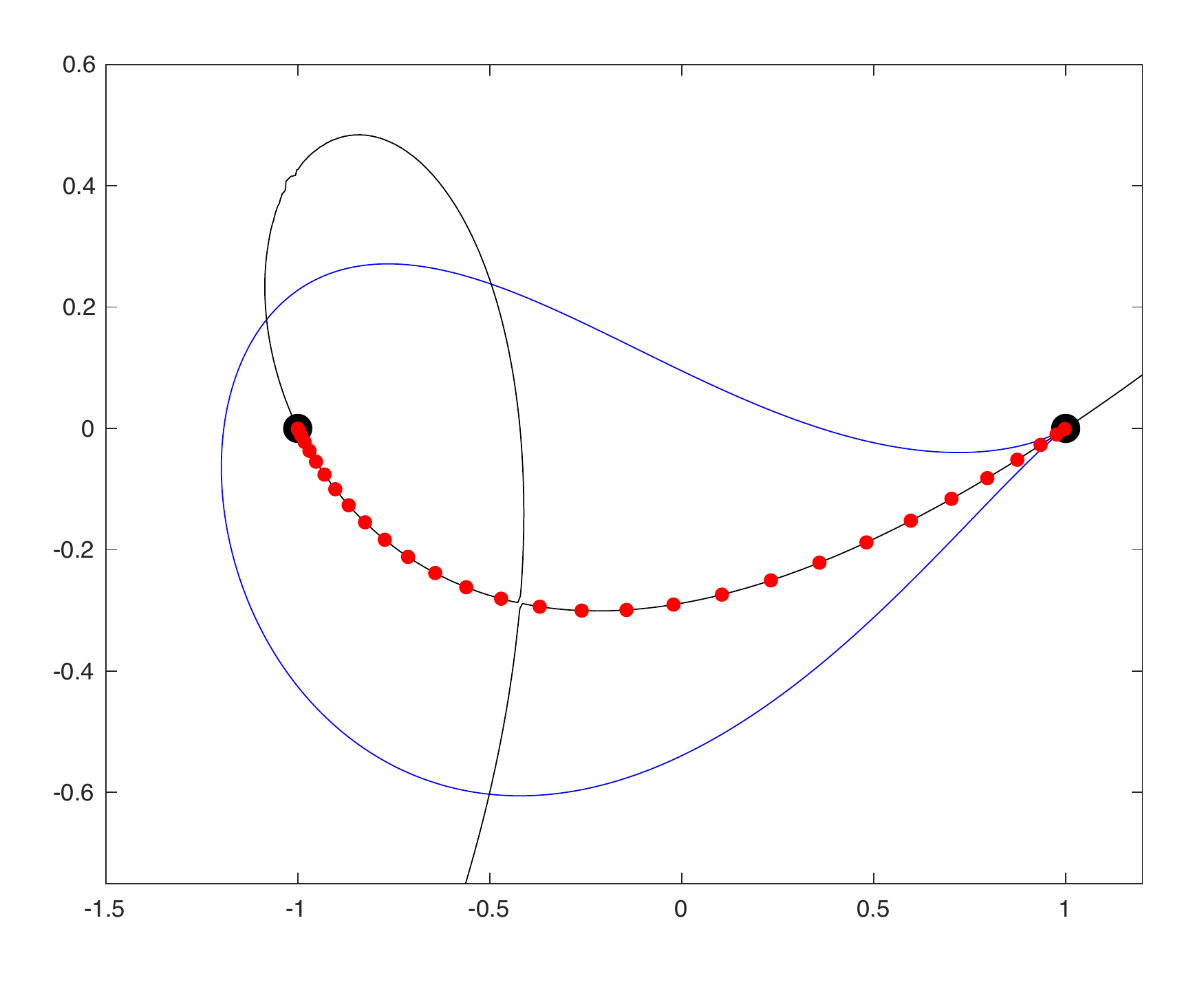}
\caption{Curve (black) where $U^{2}(z)$ is real. Here $\theta=0.2$, and $R=1.45$. The zeros of $F_{n}$ accumulate on the subarc $\AA$ of the curve, between the two dots $1$ and $-1$, where $U(z)\in[-1,1]$.}
\label{arc-nonsym}
\end{figure}

\section{Chebyshev quadrature}\label{quadrat}

There is a connection between Faber polynomials and Chebyshev quadrature. Indeed, let $\mu_K$ denote the equilibrium measure of $K$. Here we are back in the general situation where $K\subset \C$ is a compact set consisting of more than one point with the unbounded component $\Omega$ of $\bar\C\setminus K$ being simply connected. We have the following observation of Kuijlaars (\cite[Lemma 3]{Arno}):

\begin{proposition}\label{arno} Let $n\geq 1$ and let $z_1,...,z_n\in \C$. Then $z_1,...,z_n$ are the zeros of the Faber polynomials $F_n$ associated to $K$ if and only if
$$\int_K z^k d\mu_K(z)= \frac{1}{n}\sum_{j=1}^n z_j^k, \quad k=1,...,n.$$ 
\end{proposition}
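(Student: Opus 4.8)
The plan is to relate the Faber polynomials to the power sums of their zeros via the generating function, and then to identify those power sums with the moments of the equilibrium measure $\mu_K$. The key observation is that the quantities $\int_K z^k\, d\mu_K(z)$ are exactly the coefficients appearing in the logarithmic potential (or Cauchy transform) of $\mu_K$, which in turn is encoded in the conformal map $\Phi$. First I would set up the Newton-type identities connecting the elementary symmetric functions of $z_1,\dots,z_n$ (i.e.\ the coefficients of the monic polynomial $\prod_{j=1}^n(z-z_j)$) to the power sums $p_k:=\sum_{j=1}^n z_j^k$. Since $z_1,\dots,z_n$ are the zeros of $F_n$ if and only if $\prod_{j=1}^n(z-z_j)=c_K^n F_n(z)=P_n(z)$, the statement is equivalent to a system of $n$ equations relating the coefficients of $F_n$ to the moments of $\mu_K$.

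The main computational tool will be the generating-function identity (\ref{altfaber}),
$$\frac{\Psi'(w)}{\Psi(w)-z}=\sum_{n=0}^{\infty}\frac{F_n(z)}{w^{n+1}},$$
together with the standard fact that the moments of $\mu_K$ are read off from the Laurent expansion of $\log\Psi$ (equivalently, of the Cauchy transform of $\mu_K$) at infinity. Concretely, I expect to use that for the equilibrium measure one has
$$\int_K \log|z-t|\, d\mu_K(t)=\log|z|-\log c_K+o(1),\qquad z\to\infty,$$
and more precisely that the Cauchy transform $\int_K (z-t)^{-1} d\mu_K(t)$ equals $\Phi'(z)/\Phi(z)$, whose expansion at infinity produces the moments $m_k:=\int_K t^k\, d\mu_K(t)$ as its coefficients. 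The heart of the argument is then to match, order by order in $1/w$, the expansion of the left-hand side of (\ref{altfaber}) against an expansion built from the power sums $p_k$ of any candidate set of points $z_1,\dots,z_n$, and to show that agreement through order $k=1,\dots,n$ is equivalent to $F_n$ vanishing at precisely those points.

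The cleanest route is probably to pass through the logarithmic derivative of $F_n$. Writing $F_n(z)=c_K^{-n}\prod_{j=1}^n(z-z_j)$ when the $z_j$ are its zeros, one has
$$\frac{F_n'(z)}{F_n(z)}=\sum_{j=1}^n\frac{1}{z-z_j}=\sum_{k=0}^\infty \frac{p_k}{z^{k+1}},\qquad z\to\infty,$$
so the power sums $p_1,\dots,p_n$ are the first $n$ Laurent coefficients of $F_n'/F_n$ at infinity. On the other hand, from the definition $F_n(z)=\Phi(z)^n+\OO(1/z)$ one computes that these same coefficients, through order $n$, depend only on the expansion of $\Phi$ at infinity and hence only on $K$ (not on $n$), and they coincide with the moments $m_k$ of $\mu_K$ via the relation between $\Phi'/\Phi$ and the Cauchy transform of $\mu_K$. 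The equivalence in the proposition then follows: the $z_j$ are the zeros of $F_n$ iff the monic polynomial with those roots has the same top $n$ coefficients as $c_K^n F_n$, iff its first $n$ power sums agree with $m_1,\dots,m_n$, iff $\frac1n\sum_j z_j^k=m_k$ for $k=1,\dots,n$.

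The main obstacle, and the step requiring genuine care, is justifying that the first $n$ Laurent coefficients of $F_n'/F_n$ at infinity are governed entirely by the principal part $\Phi(z)^n$ and not by the $\OO(1/z)$ correction in the definition of $F_n$ — equivalently, that the lower-order polynomial part of $F_n$ does not disturb the power sums $p_1,\dots,p_n$. This amounts to controlling how the error term $\OO(1/z)$ propagates through the logarithmic derivative and confirming that it only affects coefficients of order higher than $n$. I expect to handle this by expanding $\log F_n(z)=n\log\Phi(z)+\log\bigl(1+\OO(\Phi(z)^{-n}/z)\bigr)$ and checking that the correction contributes only at orders $\geq n+1$ in $1/z$, so that $p_1,\dots,p_n$ are read directly from $n\log\Phi(z)$; differentiating $n\log\Phi$ and comparing its expansion with the Cauchy transform $\int_K(z-t)^{-1}d\mu_K(t)=\Phi'(z)/\Phi(z)$ then yields $p_k=n\,m_k$ for $k\leq n$, which is the asserted identity.
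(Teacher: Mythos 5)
Your proposal is correct, but there is nothing in the paper to compare it against: the paper does not prove Proposition \ref{arno} at all, it simply quotes it as an observation of Kuijlaars \cite[Lemma 3]{Arno}. Your argument is in effect a self-contained reconstruction of the standard proof, and its three ingredients are exactly the right ones: (i) the identity $\int_K(z-t)^{-1}\,d\mu_K(t)=\Phi'(z)/\Phi(z)$ on $\Omega$, which follows from $\log|\Phi(z)|=\int_K\log|z-t|\,d\mu_K(t)-\log c_K$ by passing to analytic completions and differentiating; (ii) the estimate $F_n/\Phi^n=1+\OO(z^{-(n+1)})$ near infinity (since $F_n-\Phi^n=\OO(1/z)$ while $\Phi^n\sim z^n/c_K^n$), which upon taking logarithmic derivatives gives $F_n'/F_n=n\,\Phi'/\Phi+\OO(z^{-(n+2)})$ and hence $p_k=n\,m_k$ for $1\le k\le n$; and (iii) Newton's identities in characteristic zero, which yield the converse because $p_1,\dots,p_n$ determine the monic degree-$n$ polynomial with roots $z_1,\dots,z_n$, forcing it to equal $c_K^nF_n$. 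Two points of hygiene you should fix when writing this up: your middle paragraphs twice assert that the Laurent coefficients of $F_n'/F_n$ ``coincide with the moments $m_k$'' and ``do not depend on $n$'' --- they are $n\,m_k$, as your final computation correctly states; and when forming $\log(F_n/\Phi^n)$ you should note that $F_n/\Phi^n$ is holomorphic and zero-free in a neighborhood of infinity (as $|\Phi|>1$ on $\Omega$ and the ratio tends to $1$), so the branch tending to $0$ is well defined and the $\OO$-estimate may legitimately be differentiated coefficient-by-coefficient as a convergent Laurent series.
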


This condition says that for any polynomial $p$ of degree at most $n$,
$$\int_K p(z) d\mu_K(z)= \frac{1}{n}\sum_{j=1}^n p(z_j).$$
In other words, $z_1,...,z_n$ are the {\it Chebyshev quadrature nodes of order $n$ for $\mu_K$}.

More generally, given a (say) probability measure $\mu$ with compact support $K\subset \C$, points $z_1,...,z_n\in \C$ are 
{\it Chebyshev quadrature nodes of order $n$ for $\mu$} if for any polynomial $p$ of degree at most $n$,
$$\int_K p(z) d\mu(z)= \frac{1}{n}\sum_{j=1}^n p(z_j)$$
(cf., \cite{Kr}). Proposition \ref{arno} for the interval $[-1,1]$ gives another way to see the Faber polynomials (appropriately normalized) are the classical Chebyshev polynomials of the first kind
$$T_n(z)=\frac{1}{2}\left([z+\sqrt{z^2-1}]^n+ [z-\sqrt{z^2-1}]^n\right).$$
Here recall $d\mu_{[-1,1]}(x) = {1}/({\pi}{\sqrt{1-x^2}})dx$. 
\begin{figure}[!tb]
\centering
\includegraphics[width=9cm]{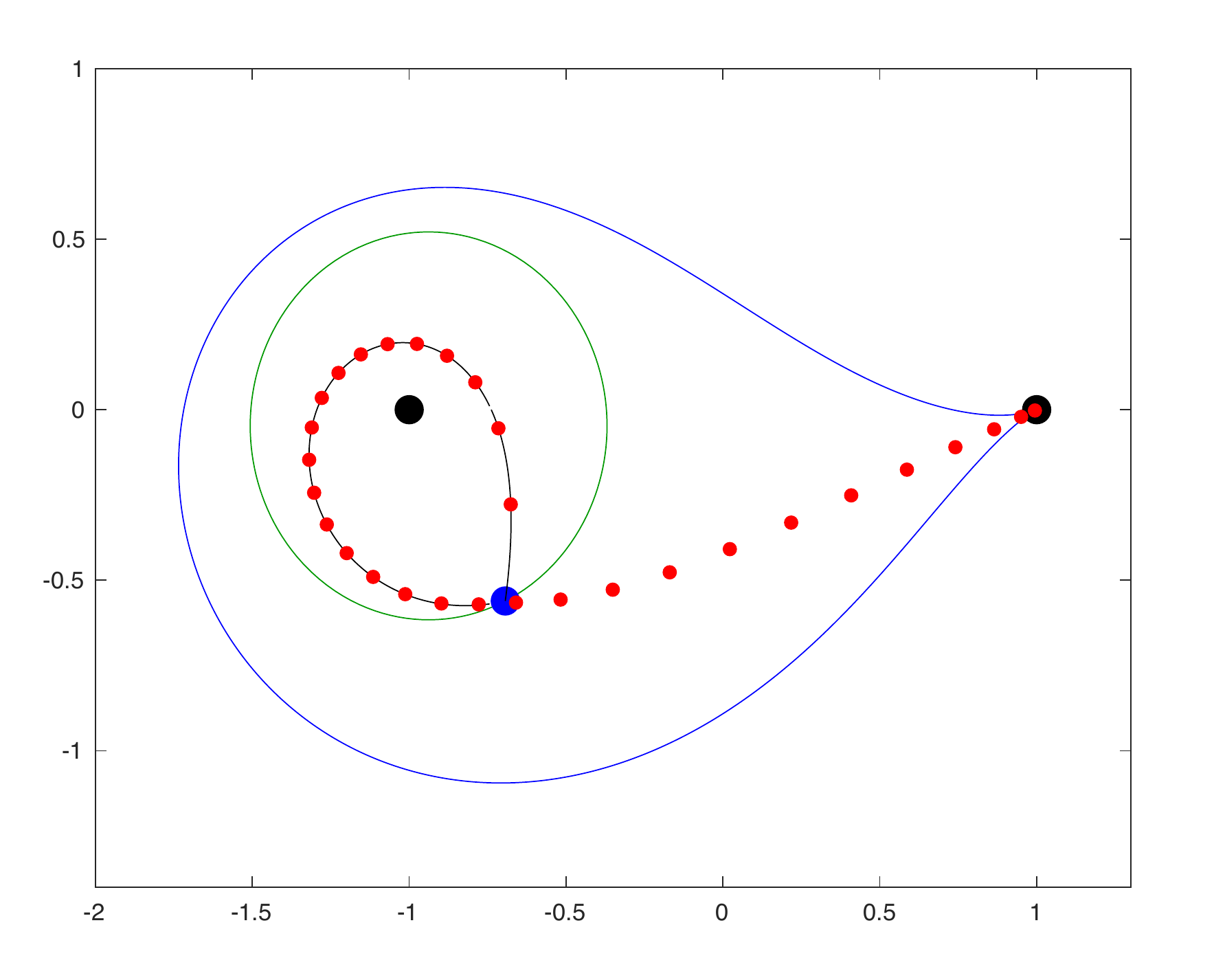}
\caption{Joukowski airfoil (blue) with $\theta=0.2$, $R=2.1$, and the zeros of the Faber polynomials $F_{n}(z)$, $n=30$. The zeros inside the circle $\CC_{b}$ (green) accumulate on the loop (black). The black dots are the real points $1$ and $-1$. The blue dot where the loop ends is the point $\II_{b}$ where the circle $\CC_{b}$ and the arc $\AA$ intersect.}
\label{loop-comp-comp}
\end{figure}
Ullman proved in \cite{U2} that for $-1/4\leq \alpha \leq 1/4$, 
the measure
\begin{equation}\label{ullman} d\mu(x)= \frac{1}{\pi}\frac{1}{\sqrt{1-x^2}}\frac{1+2\alpha x}{1+4\alpha^2+4\alpha x}dx, \quad x\in (-1,1), \end{equation}
supported on $[-1,1]$, 
admits Chebyshev quadrature with nodes $z_1^{(n)},...,z_n^{(n)}$ lying in $[-1,1]$; 
it follows that any weak-* limit of the sequence of measures $\mu_n:= \frac{1}{n}\sum_{j=1}^n \delta_{z_j^{(n)}}$ has the same moments as those of $\mu$, hence is equal to $\mu$, and thus the whole sequence $\mu_{n}$ converges weak-* to $\mu$. Indeed, he shows that 
$$\left( z+\sqrt{z^2-1}+2\alpha\right)^n+\left( z-\sqrt{z^2-1}+2\alpha\right)^n -(2\alpha)^n$$
is a polynomial of degree $n$ with zeroes at $z_1^{(n)},...,z_n^{(n)}$. This is a special case of our formula (\ref{use?}). Hence Ullman's Chebyshev quadrature nodes for the measure $\mu$ in (\ref{ullman}) are precisely the zeros of the Faber polynomials corresponding to the situation of Theorem \ref{realullman}. Here $\alpha=({R-1})/{2}$. Since $R>1$, the condition $(R-1)/2\leq 1/4$ becomes $1< R \leq 3/2$ as in our theorem. 
Note also that the measure $\mu$ in (\ref{ullman}) corresponds to the limit measure in (\ref{push-expl}) (and the balayage of $\mu$ to $\partial K$, where $K=K(R)$ is the corresponding Joukowski airfoil, is $\mu_K$).

Although it is not clear to us how Ullman arrived at his family of measures in (\ref{ullman}), we make the following observation. Suppose that a compact set $K$ is given with the property that for each $n\geq 1$, the zeros 
$z_1^{(n)},...,z_n^{(n)}$ of the Faber polynomial $F_{n}$ for $K$ lie in some interval $[a,b]\subset\R$, and moreover that the corresponding counting measures $\mu_{n}$
converge weak-* to a measure $\mu$. It then follows that $\mu$ admits Chebyshev quadrature with nodes $z_1^{(n)},...,z_n^{(n)}$, $n\geq1$, lying in $[a,b]$.

\vspace{2cm}
{\obeylines
\texttt{N. Levenberg, nlevenbe@indiana.edu
Indiana University, Bloomington, IN 47405 USA
\medskip
F. Wielonsky, franck.wielonsky@univ-amu.fr
Universit\'e Aix-Marseille, CMI 39 Rue Joliot Curie
F-13453 Marseille Cedex 20, FRANCE }
}

\end{document}